\def\cl{\centerline}
\def\al{\alpha}
\def\vs{\vspace*}
\def\Z{\mathbb{Z}}
\def\F{\mathbb{F}}
\def\QED{\hfill$\Box$}
\def\W{\mathscr W(\Gamma,s)}
\numberwithin{equation}{section}
\newtheorem{theo}{Theorem}[section]
\newtheorem{lemm}[theo]{Lemma}
\newtheorem{clai}{Claim}
\begin{document}
\begin{CJK*}{GBK}{song}

\begin{center}
{\bf\large 
Derivations, automorphisms and second cohomology of generalized loop  Schr\"{o}dinger-Virasoro
algebras\,$^*$} \footnote {$^*\,$Supported by NSF grant no. 11371278, 11431010, 
the Fundamental Research Funds for the Central Universities of China, Innovation Program of Shanghai Municipal Education Commission and  Program for Young Excellent Talents in Tongji University.

$\ \ ^{{\,}\dag}\,$Corresponding author: J. Han (jzhan@tongji.edu.cn). }
\end{center}

\cl{Haibo Chen,  Guangzhe Fan, Jianzhi
Han$^{\dag}$, Yucai Su
}

\cl{\small  Department of Mathematics, Tongji University,
Shanghai 200092, China}
\vs{5pt}

\vs{8pt}

{\small
\parskip .005 truein
\baselineskip 3pt \lineskip 3pt

\noindent{{\bf Abstract:} The derivation
algebras,  automorphism groups and  second cohomology groups
 of the generalized loop Schr\"{o}dinger-Virasoro algebras are completely determined in this paper. \vs{5pt}

\noindent{\bf Key words:} generalized loop Schr\"{o}dinger-Virasoro
 algebra, derivation, automorphism, 2-cocycle.}

\noindent{\it Mathematics Subject Classification (2010):} 17B05, 17B40, 17B65, 17B68.}
\parskip .001 truein\baselineskip 6pt \lineskip 6pt
\section{Introduction}
\setcounter{equation}{0}

It is well-known that the Schr\"{o}dinger-Virasoro algebra is an
important infinite dimensional Lie algebra, whose theory plays a
crucial role in many areas of mathematics and physics. The original
Schr\"{o}dinger-Virasoro Lie algebra was introduced in \cite{M1}, in
the context of nonequilibrium statistical physics, containing as
subalgebras both the Lie algebra of invariance of the free
Schr\"{o}dinger equation and the Virasoro algebra, whose Lie bialgebra structures and representations were investigated  (see, e.g., \cite{HLS,LS1,ZT}).  Let $\mathbb{F}$ be a field of characteristic 0, $\Gamma$
a proper additive  subgroup of $\mathbb{F}$,  and let $s\in \mathbb{F}$ be such
that $s\notin \Gamma$ and $2s\in \Gamma$. We denote
$\Gamma_1=s+\Gamma$ and $T=\Gamma\cup \Gamma_1.$ It is obvious that
$T$ is an additive subgroup of $\mathbb{F}$. The generalized
Schr\"{o}dinger-Virasoro algebra $\mathfrak{gsv}[\Gamma,s]$ is an infinite dimensional Lie
algebra with $\mathbb{F}$-basis $\{L_\alpha,M_\alpha,Y_{\alpha+s}
\mid \alpha\in \Gamma\}$ subject to the following relations:
\begin{eqnarray*}&[L_\alpha,L_\beta]= (\beta-\alpha)L_{\alpha+\beta},\ [L_\alpha,M_\beta]= \beta M_{\alpha+\beta},\
[L_\alpha,Y_{\beta+s}]= (\beta+s -\frac{\alpha}{2})
Y_{\alpha+\beta+s},\\
&[M_\alpha,M_\beta]=0,\ [M_\alpha,Y_{\beta+s}]=0,\ [Y_{\alpha+s},Y_{\beta+s}]= (\beta-\alpha)M_{\alpha+\beta+2s}.\end{eqnarray*}
Derivations, central extensions, automorphisms and Verma modules were studied in \cite{WLX,TZ},

The {\em generalized loop
Schr\"{o}dinger-Virasoro algebra} $\W$ is the tensor product \linebreak \mbox{$\mathfrak{gsv}[\Gamma,s]\otimes\mathbb{F}[t,t^{-1}]$} of the generalized
Schr\"{o}dinger-Virasoro algebra $\mathfrak{gsv}[\Gamma,s]$
with the  Laurent polynomial algebra $\mathbb{F}[t,t^{-1}]$, subject to relations:
\begin{eqnarray}
\label{rel1}&&[L_{\alpha,i},L_{\beta,j}]= (\beta-\alpha)L_{\alpha+\beta,i+j},
\ \ \ \ \ 
[L_{\alpha,i},M_{\beta,j}]= \beta M_{\alpha+\beta,i+j},\\
&&\label{rel3}[L_{\alpha,i},Y_{\beta+s,j}]= (\beta+s -\frac{\alpha}{2})
Y_{\alpha+\beta+s,i+j},\\
&&\label{rel4}[M_{\alpha,i},M_{\beta,j}]=0,
\ \ \ \ \ \ \ \ 
\label{rel5}[M_{\alpha,i},Y_{\beta+s,j}]=0,\\ &&\label{rel6}[Y_{\alpha+s,i},Y_{\beta+s,j}]=
(\beta-\alpha)M_{\alpha+\beta+2s,i+j}
\mbox{ \ for any $\alpha,\beta\in \Gamma$ and $i,j\in\Z$,}\end{eqnarray}
where in general, $X_{\gamma,i}=X_{\gamma}\otimes t^i$ (and denoted by $X_\gamma t^i$ for short) for any $X\in\{L, M, Y\},\gamma\in T$ and $i\in\Z$.  Here and below,
  we use the convention that if an undefined notation appears in an expression, we treat it as zero; for instance,
$L_{\alpha+s}=0,\,M_{\beta+s}=0$ and $Y_\alpha=0$  for any $\alpha\in\Gamma $.
Note that
$\oplus_{i\in\Z}
\mathbb F M_{0,i}$ is exactly the center of $\W$.

The structure theory  of  infinite dimensional Lie algebras
have been extensively studied  due to its important role in Lie algebra (see, e.g.,
\cite{CHS, DZ,F,FZY,SS041,SS042,S02-1,
S1,S2,SXZ00,SZ1,SJ,WLX1}). Loop algebras are certain types of Lie algebra, of particular
interest in theoretical physics, which contribute a large ingredient to
the structure theory of Lie algebras such as affine Lie
algebras in \cite{K}. Recently, generalized loop super-Virasoro  algebra, loop Witt algebra and generalized
loop Virasoro  algebra  were studied in \cite{DHS,TZ1,WWY,GLZ}.

In this paper, we  study  derivations,
automorphisms and  second cohomology groups of generalized loop Schr\"{o}dinger-Virasoro
algebras.
As a result, we also determine the universal central extension of $\W$ (cf.~\eqref{Ext-}).
To determine the derivation algebra ${\rm Der\,}\W$ of $\W$, the most effective way is to decompose it into a sum of
 the inner derivation algebra ad$\,\W$ and the derivation  subalgebra $\big({\rm Der\,}\W\big)_0$ consisting of all homogenous derivations of degree zero (see \cite{F} and Lemma \ref{lemm2.4}).   Then the remaining task is to determine $\big({\rm Der\,}\W\big)_0$, which is the main goal of Section 2. While determining  the second cohomology group of $\W$ depends heavily  on computation, the determination of the automorphism group ${\rm Aut\,}\W$ of $\W$ is the most difficult part among these three kinds of structures.  For the problem of determining ${\rm Aut\,}\W$,  it is easy but important to observe that the problem can be  reduced to the case  $\mathfrak{gsv}[\Gamma,s]$ when setting $t=1$. By this and by constructing some kinds of concrete automorphisms we  give an explicit characterization of the automorphism group of $\W$.
The main results of the present paper are summarized in Theorems \ref{theo1},  \ref{theo3.4} and \ref{th4.6}.

\section{Derivation algebra of $\W$}

   A linear map $D:\W\rightarrow \W$ is called a {\it
derivation} of $\W$ if

$$D\big([x,y]\big)=[D(x),y]+[x,D(y)],  \quad \forall\ x,y\in \W;$$ if in addition, $D={\rm ad}_z$ for some $z\in \W$,
then $D$ is called an {\em inner derivation}, where
 ${\rm ad}_z(x)=[z,x]$ for any $x\in \W$. Denote by ${\rm Der\,}\W$ and ${\rm ad\,}\W$ the vector space of all derivations and inner
derivations, respectively. Then
$${H}^{1}(\W, \W)\cong{\rm Der\,}{\W}/{\rm ad\,}{\W}$$ is {\em the first cohomology group of $\W$}.

Note  from the relations \eqref{rel1} and \eqref{rel3} that $L_{0,0}$ is semisimple, which gives
a $T$-grading on $\W$: $\W=\oplus_{\mu\in T}
\W_\mu,$
with\begin{eqnarray}\label{T-grading}
\W_\mu=\{x\in\W\mid [L_{0,0},x]=\mu x\}=\{L_{\mu,j},\,M_{\mu,j},\,Y_{\mu,j}\,|\,j\in\Z\}.
\end{eqnarray}We
say that a derivation $D\in {\rm Der\,}\W$ is of degree $\gamma\in T$
if   $D\big(\W_{\al}\big)\subset \W_{\al+\gamma}$ for all $\alpha\in T$.
Let $\big({\rm Der\,}\W\big)_{\gamma}$ be the space of all derivations
of degree $\gamma$.

The following result first appeared in \cite{S2}.
\begin{lemm}\label{lemm2.1}
For any derivation $D\in \W$, one has
 \begin{equation}\label{L2.1}
D=\mbox{$\sum\limits_{\gamma\in T}$}D_\gamma,\quad where \quad
D_{\gamma}\in\big({\rm Der\,}\W\big)_\gamma ,
\end{equation}
 which holds in the sense that for every $x\in\W$, only finitely
 many $D_\gamma(x)\neq0$,
 and $D(x)=\mbox{$\sum
 _{\gamma\in T}$}D_\gamma(x)$ $($such a sum in \eqref{L2.1} is called summable$)$.
\end{lemm}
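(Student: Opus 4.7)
The plan is to exploit the $T$-grading $\W=\oplus_{\mu\in T}\W_\mu$ from \eqref{T-grading}, coming from the semisimple action of ${\rm ad\,}L_{0,0}$. Because every element of $\W$ is a finite sum of homogeneous vectors, any derivation $D$ admits a natural componentwise projection; the tasks are then to define the projections $D_\gamma$, to verify that each is itself a derivation of $\W$, and to check the global summability claim.

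First, for each homogeneous $x\in\W_\alpha$, expand $D(x)=\sum_{\beta\in T}v_\beta(x)$ with $v_\beta(x)\in\W_\beta$, a sum with only finitely many nonzero terms. For every $\gamma\in T$ I would set $D_\gamma(x):=v_{\alpha+\gamma}(x)\in\W_{\alpha+\gamma}$ on homogeneous pieces and extend $D_\gamma$ linearly to all of $\W$. By construction $D_\gamma$ is a linear endomorphism of degree $\gamma$, and for each homogeneous $x\in\W_\alpha$ the sum $\sum_\gamma D_\gamma(x)$ reproduces $D(x)$ with only finitely many nonzero summands. For an arbitrary $x=\sum_{i=1}^k x_{\alpha_i}$ written as a finite sum of homogeneous elements, the set of $\gamma\in T$ with $D_\gamma(x)\neq 0$ lies in the finite union of the corresponding finite index sets, so the summability claim holds globally.

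Next, I would verify the Leibniz identity for each $D_\gamma$. For homogeneous $x\in\W_\alpha$ and $y\in\W_\beta$, the defining relations \eqref{rel1}--\eqref{rel6} give $[x,y]\in\W_{\alpha+\beta}$, so the grading is compatible with the bracket. Expanding both sides of $D([x,y])=[D(x),y]+[x,D(y)]$ into their $T$-homogeneous components and isolating the part in $\W_{\alpha+\beta+\gamma}$ yields $D_\gamma([x,y])=[D_\gamma(x),y]+[x,D_\gamma(y)]$, since the only contributions on the right-hand side come from $D_\gamma(x)$ (of degree $\alpha+\gamma$) paired with $y$, and $x$ paired with $D_\gamma(y)$ (of degree $\beta+\gamma$). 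Bilinearity then extends the Leibniz rule to all $x,y\in\W$, so $D_\gamma\in\big({\rm Der\,}\W\big)_\gamma$.

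The main obstacle is essentially careful bookkeeping: one must track the interplay between the componentwise definition and finite sums of homogeneous pieces of different degrees, and make sure the summability of $\sum_\gamma D_\gamma$ is globally consistent. There is no deeper conceptual subtlety, since $T$ lies in a characteristic-zero field and ${\rm ad\,}L_{0,0}$ acts diagonalizably with eigenvalues in $T$; the statement is a standard consequence of the general theory of abelian-group-graded Lie algebras whose grading is induced by a single semisimple inner derivation, exactly as in \cite{S2}.
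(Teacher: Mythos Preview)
Your proposal is correct and follows exactly the same approach as the paper: define $D_\gamma(x_\alpha)$ as the $\W_{\alpha+\gamma}$-component of $D(x_\alpha)$ for homogeneous $x_\alpha$, then check that each $D_\gamma$ is a derivation. The paper's proof is much terser (it simply says ``$D_\gamma$ is a derivation by a direct computation''), so your version supplies the details the paper omits, but there is no difference in method.
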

\noindent{\it Proof.~}~For any $x_\alpha\in \W_\alpha$ and
$D\in{\rm Der\,}\W$, assume that
$D(x_\alpha)=\mbox{$\sum
_{\beta\in T}$} y_\beta.$ We
define $D_\gamma(x_\alpha)=y_{\alpha+\gamma}$. Then $D_\gamma$ is a
derivation by a direct computation.\QED

\begin{lemm}\label{lemm2.2}
If $\gamma\neq 0$, then every $D \in\big({\rm Der\,}{\W}\big)_{\gamma}$ is
an inner derivation.
\end{lemm}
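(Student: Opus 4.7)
The plan is to exploit the fact that $L_{0,0}$ acts semisimply on $\W$ via the $T$-grading in \eqref{T-grading}, and to peel off an inner derivation so that the remainder kills $L_{0,0}$ and, by the derivation identity, must be zero.

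First I would define a candidate inner derivation. Given $D\in\big(\mathrm{Der}\,\W\big)_\gamma$ with $\gamma\neq 0$, note that $D(L_{0,0})\in\W_\gamma$ because $D$ shifts degrees by $\gamma$ and $L_{0,0}\in\W_0$. Since every element $z\in\W_\gamma$ satisfies $[L_{0,0},z]=\gamma z$ by definition of the grading, I set
\[ z=-\gamma^{-1}D(L_{0,0})\in\W_\gamma, \]
which makes sense because $\mathbb{F}$ has characteristic $0$ and $\gamma\neq 0$. A direct check shows $\mathrm{ad}_z(L_{0,0})=[z,L_{0,0}]=-\gamma z=D(L_{0,0})$.

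Next I would reduce to the case $D(L_{0,0})=0$. Put $D'=D-\mathrm{ad}_z$; this is still a homogeneous derivation of degree $\gamma$, and $D'(L_{0,0})=0$ by construction. For any homogeneous $x_\alpha\in\W_\alpha$, applying $D'$ to the identity $[L_{0,0},x_\alpha]=\alpha x_\alpha$ gives on one hand $\alpha D'(x_\alpha)$, and on the other hand
\[ [D'(L_{0,0}),x_\alpha]+[L_{0,0},D'(x_\alpha)]=(\alpha+\gamma)D'(x_\alpha), \]
since $D'(x_\alpha)\in\W_{\alpha+\gamma}$. Subtracting yields $\gamma\,D'(x_\alpha)=0$, and because $\gamma\neq 0$ this forces $D'(x_\alpha)=0$. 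As every element of $\W$ is a finite sum of homogeneous components, we conclude $D'\equiv 0$, i.e.\ $D=\mathrm{ad}_z$ is inner.

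There is no real obstacle here: the argument is the standard semisimple-element trick, and the only points to verify are that $\gamma^{-1}$ is available (guaranteed by $\gamma\neq 0$ and $\mathrm{char}\,\mathbb{F}=0$) and that $D(L_{0,0})$ genuinely lies in the homogeneous component $\W_\gamma$ rather than in some completion, which is immediate from the direct sum decomposition $\W=\oplus_{\mu\in T}\W_\mu$. The harder work is deferred to the next lemma, which must handle the degree-zero component $\big(\mathrm{Der}\,\W\big)_0$ where $L_{0,0}$ offers no leverage.
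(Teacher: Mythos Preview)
Your proof is correct and follows essentially the same approach as the paper: both exploit the derivation identity applied to $[L_{0,0},x_\alpha]=\alpha x_\alpha$ together with the fact that $D(x_\alpha)\in\W_{\alpha+\gamma}$ to conclude $D=\mathrm{ad}_{-\gamma^{-1}D(L_{0,0})}$. The only cosmetic difference is that you first subtract off $\mathrm{ad}_z$ and show the remainder vanishes, whereas the paper carries $D(L_{0,0})$ through the computation directly and reads off the same conclusion in one line.
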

\begin{proof}It follows from \eqref{T-grading} and applying $D$ to $[L_{0,0},x]=\mu x$ for any $x\in\W_\mu$ with $\mu\in T$ that
\begin{equation*}
\mu
D(x)=[D(L_{0,0}),x]+[L_{0,0},D(x)].
\end{equation*}
  From this, using  $D(x)\in \W_{\mu+\gamma}$ and  \eqref{T-grading}, one has
$D(x)={\rm ad}_{-\gamma^{-1}D(L_{0,0})}(x).$
Hence, $D={\rm ad}_{-\gamma^{-1}D(L_{0,0})},$ an inner derivation.\end{proof}

As a  consequence of  Lemmas
\ref{lemm2.1} and \ref{lemm2.2}, we immediately have the following result.

\begin{lemm}\label{lemm2.4}
$ {\rm Der\,} \W=\big({\rm Der\,}\W\big)_0+{\rm ad\,} \W.$
\end{lemm}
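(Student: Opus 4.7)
The plan is to simply chain Lemmas \ref{lemm2.1} and \ref{lemm2.2} together, with one small extra observation needed to ensure that the ``inner part'' can be realised as $\mathrm{ad}_z$ for a single element $z\in\W$, not merely as a (possibly infinite) formal sum of inner derivations.

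Given $D\in\mathrm{Der}\,\W$, I would first invoke Lemma \ref{lemm2.1} to write $D=\sum_{\gamma\in T}D_\gamma$ summably, with $D_\gamma\in(\mathrm{Der}\,\W)_\gamma$. For each $\gamma\neq 0$, Lemma \ref{lemm2.2} yields $D_\gamma=\mathrm{ad}_{z_\gamma}$, where (following the formula in that proof) $z_\gamma=-\gamma^{-1}D_\gamma(L_{0,0})\in\W_\gamma$. So at the level of linear maps one has the pointwise identity
\begin{equation*}
D(x)=D_0(x)+\sum_{\gamma\in T\setminus\{0\}}[z_\gamma,x]\quad\text{for every }x\in\W,
\end{equation*}
with $D_0\in(\mathrm{Der}\,\W)_0$.

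The key step is then to observe that the element $z:=\sum_{\gamma\neq 0}z_\gamma$ actually lies in $\W$, i.e.\ only finitely many $z_\gamma$ are nonzero. This is immediate from the summability clause of Lemma \ref{lemm2.1} applied at the single vector $L_{0,0}$: the expression $D(L_{0,0})=\sum_{\gamma}D_\gamma(L_{0,0})$ has only finitely many nonzero terms, so $D_\gamma(L_{0,0})=0$, hence $z_\gamma=0$, for all but finitely many $\gamma\in T\setminus\{0\}$. Therefore $z\in\W$ and $\mathrm{ad}_z=\sum_{\gamma\neq 0}\mathrm{ad}_{z_\gamma}$, giving $D=D_0+\mathrm{ad}_z\in(\mathrm{Der}\,\W)_0+\mathrm{ad}\,\W$.

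The only real subtlety here, and the step one has to be careful about, is this finiteness observation: without it, the argument would at best express $D$ as $D_0$ plus a summable family of inner derivations, which is not the same as an element of $\mathrm{ad}\,\W$. Everything else is formal bookkeeping, and the whole argument is essentially two lines once Lemmas \ref{lemm2.1} and \ref{lemm2.2} are in hand.
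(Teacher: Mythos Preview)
Your proof is correct and follows essentially the same approach as the paper: both evaluate the summable decomposition at $L_{0,0}$ to see that only finitely many of the homogeneous inner pieces are nonzero, so their sum is a genuine element of $\W$. The paper phrases this as a short contradiction argument (if infinitely many $u_\gamma\neq0$ then $D(L_{0,0})$ would be an infinite sum), while you give the direct version, but the mathematical content is identical.
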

\begin{proof}
For $\gamma\ne0$, assume $D_\gamma=
{\rm ad}_{u_\gamma}$ for some 
$u_\gamma
\in \W_\gamma$. 
If $\{\gamma\ne0\,|\,u_\gamma\ne0\}$ is an infinite set, then by \eqref{T-grading},
$D(L_{0,0})=D_0(L_{0,0})-\sum_{\gamma\ne0}\gamma u_\gamma$ is an infinite sum which is not an element of $\W$, a contradiction.
\end{proof}
Next we are going to characterize $({\rm Der\,}\W)_0$. To do this, let us first introduce some notations. For any map $f$ from   a set $A$ to another set $B$, we write the image of $a\in A$ under $f$ as $f_a$ rather than $f(a)$ in what follows.
Denote ${\rm Hom}_\mathbb{Z}(T,\ \mathbb{F}[t,t^{-1}])$  the set of group
homomorphisms  $\phi:T\rightarrow \mathbb{F}[t,t^{-1}]$, which carries the structure of a vector space by defining $(c\phi)(\gamma)=c\phi(\gamma)$ for any given $\phi\in{\rm Hom}_\Z(T,\ \mathbb F[t,t^{-1}])$ and any $c\in\mathbb F$, $\gamma\in T$. Denote by $\mathbb{F}[t,t^{-1}]\frac{d}{dt}$ the  derivation algebra of
$\mathbb{F}[t,t^{-1}]$   and  $$\mathfrak{g}(\Gamma)=
 \big\{g: \Gamma\rightarrow  \mathbb F[t,t^{-1}]\mid (\beta-\alpha)g_{\alpha+\beta}=\beta
g_{\beta}-\alpha g_{\alpha}, \quad\forall\  \alpha,\beta\in\Gamma\big\}.$$

For any $\phi\in {\rm Hom}_\Z(T,\ \mathbb{F}[t,t^{-1}])$, $g\in \mathfrak{g}(\Gamma)$, $b\in\mathbb F[t,t^{-1}]$ and $\rho\in \mathbb{F}[t,t^{-1}]\frac{d}{dt}$,  define respectively four linear
 maps $D_\phi,D_g,D_b$ and $D^\rho$ of $\W$
 as follows:
\begin{eqnarray}
\label{red2.3}&& D_{\phi}( X_{\mu,i})=\phi(\mu)X_{\mu,i},\\
&&\label{red2.4}D_{g}(L_{\alpha,i})=g_{\alpha}M_{\alpha,i}, \ \ \ \
D_{g}(M_{\alpha,i})=D_{g}(Y_{\alpha+s,i})=0,\\
&&\label{red2.5}D_b(L_{\alpha,i})=0,\ \ \ \
D_b(M_{\alpha,i})=bM_{\alpha,i}, \ \ \ \
D_b(Y_{\alpha+s,i})=\frac{1}{2}bY_{\alpha+s,i},\\
&&\label{red2.6}D^{\rho}(
X_{\mu,i})=X_{\mu}\rho(t^{i}),\quad \forall\  X\in\{L,M,Y\}, \mu\in
T, \alpha\in\Gamma, i\in\Z.\end{eqnarray}  It is easy to see that these
four  operators defined above  are all homogeneous derivations
of degree zero.   We use the same notations    \begin{eqnarray*}&&\mathcal{D}_{{\rm
Hom}_\Z(T,\ \mathbb{F}[t,t^{-1}])}:=\{D_\phi\mid \phi\in {\rm
Hom}_\Z(T,\ \mathbb{F}[t,t^{-1}]) \},\\
&&\mathcal{D}_{\mathfrak{g}(\Gamma)}:={\rm span}\{D_g\mid g\in \mathfrak{g}(\Gamma) \},\\
&&\mathcal{D}_{\mathbb
F[t,t^{-1}]}:= \{D_b\mid b\in\mathbb F[t,t^{-1}]\}\quad{\rm and}\quad\mathcal{D}_{\mathbb{F}[t,t^{-1}]\frac{d}{dt}}:=\{D^\rho\mid \rho\in \mathbb{F}[t,t^{-1}]\frac{d}{dt}\}\end{eqnarray*} to
denote respectively the corresponding subspaces of ${\rm Der\,}\W.$

Take any $D\in ({\rm Der\,} \W)_0$. For any $\alpha\in\Gamma$ and
$i\in\Z$, assume that
\begin{equation}\label{dd7}
D(L_{\alpha,i})=f_{\alpha,i}L_{\alpha,i}+g_{\alpha,i}M_{\alpha,i},\quad {\rm where}\ f_{\alpha,i},g_{\alpha,i}\in
\mathbb{F}[t,t^{-1}].
\end{equation}
 Applying $D$ to the first relation in \eqref{rel1} and using  \eqref{dd7} give
\begin{eqnarray*}
&&(\beta-\alpha)f_{\alpha,i}L_{\alpha+\beta,i+j}-\alpha g_{\alpha,i}M_{\alpha+\beta,i+j}+(\beta-\alpha)f_{\beta,j}L_{\alpha+\beta,i+j}
+\beta g_{\beta,j}M_{\alpha+\beta,i+j}\\
&&=(\beta-\alpha)f_{\alpha+\beta,i+j}L_{\alpha+\beta,i+j}+(\beta-\alpha)g_{\alpha+\beta,i+j}M_{\alpha+\beta,i+j}.\end{eqnarray*}
 Comparing the coefficients of $L_{\alpha+\beta,i+j}$ and $M_{\alpha+\beta,i+j},$ one has
\begin{eqnarray}\label{eq1}
&&f_{\alpha+\beta,i+j}=f_{\alpha,i}+f_{\beta,j}
\mbox{ \ for } \alpha\neq \beta,
\\ 
\label{eq2}
&&(\beta-\alpha)g_{\alpha+\beta,i+j}=\beta
g_{\beta,j}-\alpha g_{\alpha,i},\quad \forall\ \al,\beta\in\Gamma, i,j\in\Z.
\end{eqnarray}
It is obvious that $f_{0,0}=0$. Furthermore, from
\eqref{eq1}  one can, in fact, see that
\begin{equation}\label{f=f+f}f_{\alpha+\beta,i+j}=f_{\alpha,i}+f_{\beta,j},\quad \forall\ \alpha,\beta\in \Gamma.\end{equation}

Setting $\alpha=0$ and $j=0$ in \eqref{eq2}, one has
$g_{\beta,i}=g_{\beta,0}$ whenever $\beta\neq 0$.  From this and by
setting $\alpha=-\beta\neq0$ in \eqref{eq2}, we have $g_{0,i+j}=g_{0,0}.$
Thus,  $g_{\beta,i}=g_{\beta,0}$ for any $ \beta\in \Gamma$ and $i \in
\Z, $ which is denoted by $g_\beta$ for convenience. Whence
\eqref{eq2} turns out to be $
(\beta-\alpha)g_{\alpha+\beta}=\beta g_{\beta}-\alpha g_{\alpha},
$ 
that is, $g\in \mathfrak{g}(\Gamma)$.

Put $\rho=tf_{0,1}\frac{d}{dt}$. Replacing  $D$ by $D-D^\rho$
entails us to assume that $f_{0,1}=0$, which in turn, by \cite[Lemma
2.4]{WWY}, gives rise to $f_{\beta,i}=f_{\beta,0}$ for all
$\beta\in\Gamma$ and $i\in\Z$. So in what follows we can simply write
$f_{\beta,i}$ as $f_\beta$. In this case, \eqref{f=f+f} becomes
\begin{equation}\label{ff+f}f_{\alpha+\beta}=f_{\alpha}+f_{\beta},\quad \forall\ \alpha,\beta\in \Gamma.
\end{equation}
In particular, $f\in{\rm Hom}_\Z(\Gamma, \mathbb F[t,t^{-1}])$. Whence we arrive at the following lemma.

\begin{lemm}\label{lemmas2.4-1} Let $D$ be as above.   Then there exist
$f\in {\rm Hom}_\mathbb{Z}(\Gamma,\ \mathbb{F}[t,t^{-1}])$ and  $g\in \mathfrak{g}(\Gamma)$ such that
$$D(L_{\alpha,i})=f_{\alpha}L_{\alpha,i}+g_{\alpha}M_{\alpha,i}
\mbox{ \ for
$\alpha\in\Gamma$ and $i\in\Z$.}$$\end{lemm}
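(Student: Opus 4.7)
The plan is to extract the form of $D(L_{\alpha,i})$ from two ingredients: the $T$-grading of $\W$ (which pins down the ambient subspace) and the bracket $[L_{\alpha,i},L_{\beta,j}]$ (which yields functional constraints on the coefficients), and then to normalize $D$ by an outer derivation of the form $D^\rho$ so that the $\Z$-dependence of the $L$-coefficient disappears.

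First, since $D$ has degree zero in the $T$-grading and $Y_\alpha=0$ for $\alpha\in\Gamma$, we have $D(L_{\alpha,i})\in\W_\alpha=\bigoplus_{j\in\Z}(\F L_{\alpha,j}\oplus\F M_{\alpha,j})$, which, after absorbing the tensor factor $t^i$, can be written as $D(L_{\alpha,i})=f_{\alpha,i}L_{\alpha,i}+g_{\alpha,i}M_{\alpha,i}$ with $f_{\alpha,i},g_{\alpha,i}\in\F[t,t^{-1}]$. Applying $D$ to $[L_{\alpha,i},L_{\beta,j}]=(\beta-\alpha)L_{\alpha+\beta,i+j}$ and comparing the coefficients of $L_{\alpha+\beta,i+j}$ and $M_{\alpha+\beta,i+j}$ will give the additivity relation $f_{\alpha+\beta,i+j}=f_{\alpha,i}+f_{\beta,j}$ (initially for $\alpha\neq\beta$, then promoted to all $\alpha,\beta$ by a three-term comparison using an auxiliary $\gamma$) and the cocycle-type relation $(\beta-\alpha)g_{\alpha+\beta,i+j}=\beta g_{\beta,j}-\alpha g_{\alpha,i}$.

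I would then dispose of $g$ first, as it is the easier of the two: specializing $(\alpha,j)=(0,0)$ in the cocycle-type relation kills the $i$-dependence of $g_{\beta,i}$ for $\beta\neq 0$, after which $\alpha=-\beta\neq 0$ handles $\beta=0$. Writing $g_\beta:=g_{\beta,0}$, the relation collapses to $(\beta-\alpha)g_{\alpha+\beta}=\beta g_\beta-\alpha g_\alpha$, which is exactly the defining condition of $\mathfrak g(\Gamma)$.

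The $f$-analysis is the main obstacle, because the bi-additivity $f_{\alpha+\beta,i+j}=f_{\alpha,i}+f_{\beta,j}$ on $\Gamma\times\Z$ does not by itself split into independent contributions from the two arguments. I would resolve this by subtracting from $D$ the outer derivation $D^\rho$ with $\rho:=tf_{0,1}\frac{d}{dt}$, which normalizes $f_{0,1}=0$; invoking the already-established result \cite[Lemma 2.4]{WWY} (which rules out the remaining $\Z$-dependence once this single value is killed) then forces $f_{\beta,i}=f_{\beta,0}$ for all $\beta,i$. Setting $f_\beta:=f_{\beta,0}$, the additivity relation reduces to $f_{\alpha+\beta}=f_\alpha+f_\beta$, so $f\in{\rm Hom}_\Z(\Gamma,\F[t,t^{-1}])$ and the desired expression for $D(L_{\alpha,i})$ follows.
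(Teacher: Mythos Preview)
Your proposal is correct and follows essentially the same approach as the paper: you write $D(L_{\alpha,i})=f_{\alpha,i}L_{\alpha,i}+g_{\alpha,i}M_{\alpha,i}$ using the $T$-grading, extract the additivity and cocycle relations from $[L_{\alpha,i},L_{\beta,j}]$, eliminate the $\Z$-dependence of $g$ by specializing $(\alpha,j)=(0,0)$ and then $\alpha=-\beta$, and finally normalize $f_{0,1}=0$ via $D^\rho$ with $\rho=tf_{0,1}\frac{d}{dt}$ and invoke \cite[Lemma~2.4]{WWY} to kill the $\Z$-dependence of $f$. The only cosmetic difference is that the paper treats $f$ and $g$ in a slightly interleaved order, while you dispose of $g$ first.
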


While for  the expressions of  $D(M_{\alpha,i})$ and $D(Y_{\alpha+s,i}),$ we have the following result.

\begin{lemm}\label{lemmas2.5}
Let $D$ and $f$ as in Lemma $\ref{lemmas2.4-1}$. Then there exists
$b\in\mathbb F[t,t^{-1}]$ such that \begin{eqnarray*}&&
D(M_{\alpha,i})=(f_{\alpha}+b)M_{\alpha,i},
\  \ 
D(Y_{\alpha+s,i})=\frac{1}{2}
(f_{2(\alpha+s)}+b)Y_{\alpha+s,i}
\mbox{ \ for
$\alpha\in\Gamma$ and $i\in\Z$.}\end{eqnarray*}
\end{lemm}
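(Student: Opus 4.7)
The plan is to extract the action of $D$ on $M_{\alpha,i}$ and $Y_{\alpha+s,i}$ by applying $D$ to suitable bracket relations of $\W$, combined with the expression for $D(L_{\alpha,i})$ supplied by Lemma~\ref{lemmas2.4-1}. Since $D$ has degree zero with respect to the $T$-grading \eqref{T-grading} and since $\alpha+s\in\Gamma_1$ makes $L_{\alpha+s}=M_{\alpha+s}=0$ by convention, I can write $D(M_{\alpha,i})=p_{\alpha,i}L_{\alpha,i}+q_{\alpha,i}M_{\alpha,i}$ and $D(Y_{\alpha+s,i})=r_{\alpha,i}Y_{\alpha+s,i}$ for some Laurent polynomials $p_{\alpha,i},q_{\alpha,i},r_{\alpha,i}\in\mathbb F[t,t^{-1}]$. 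The goal is then to show $p\equiv 0$, $q_\alpha=f_\alpha+b$, and $r_\alpha=\frac{1}{2}(f_{2(\alpha+s)}+b)$ for a single $b\in\mathbb F[t,t^{-1}]$.

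First I apply $D$ to $[L_{\alpha,i},M_{\beta,j}]=\beta M_{\alpha+\beta,i+j}$ and compare the coefficients of $L_{\alpha+\beta,i+j}$ and $M_{\alpha+\beta,i+j}$. This produces $(\beta-\alpha)p_{\beta,j}=\beta p_{\alpha+\beta,i+j}$ and $\beta(f_\alpha+q_{\beta,j})=\beta q_{\alpha+\beta,i+j}$. For $p$: putting $\beta=0$ gives $p_{0,j}=0$; putting $\alpha=0$ shows $p_{\beta,j}=p_\beta$ is independent of $j$; then putting $\beta=-\alpha$ with $\alpha\ne0$ yields $-2\alpha p_{-\alpha}=-\alpha p_0=0$, and since $\Gamma$ is a group this forces $p_\beta=0$ for every $\beta\in\Gamma$. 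For $q$: putting $\alpha=0$ shows $q_{\beta,j}=q_\beta$ is independent of $j$ for $\beta\ne0$; taking $\beta=-\alpha\ne0$ in the relation then shows $q_{0,k}$ is also independent of $k$. Setting $b:=q_0$, the relation $q_{\alpha+\beta}=f_\alpha+q_\beta$ (valid for $\beta\ne0$) combined with the additivity of $f$ forces $q_\alpha=f_\alpha+b$ for all $\alpha\in\Gamma$.

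Next I apply $D$ to $[Y_{\alpha+s,i},Y_{\beta+s,j}]=(\beta-\alpha)M_{\alpha+\beta+2s,i+j}$ and substitute the formula $D(M_{\alpha+\beta+2s,i+j})=(f_{\alpha+\beta+2s}+b)M_{\alpha+\beta+2s,i+j}$ just obtained. For $\alpha\ne\beta$, this reduces to $r_{\alpha,i}+r_{\beta,j}=f_{\alpha+\beta+2s}+b$. Holding $(\beta,j)$ fixed shows $r_{\alpha,i}$ is independent of $i$; writing $r_\alpha$ and expanding $f_{\alpha+\beta+2s}=f_\alpha+f_\beta+f_{2s}$, the quantity $h_\alpha:=r_\alpha-f_\alpha$ satisfies $h_\alpha+h_\beta=f_{2s}+b$ for every pair $\alpha\ne\beta$. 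Since $\Gamma$ contains $2s\ne0$ and is therefore infinite, varying the partner forces $h_\alpha$ to be a single constant $h$ with $2h=f_{2s}+b$, whence $r_\alpha=f_\alpha+\frac{1}{2}(f_{2s}+b)=\frac{1}{2}(f_{2\alpha}+f_{2s}+b)=\frac{1}{2}(f_{2(\alpha+s)}+b)$, as required.

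The main obstacle is the first claim of the lemma, namely the vanishing of the $L$-component of $D(M_{\alpha,i})$, i.e., $p\equiv 0$. The decisive step is the substitution $\beta=-\alpha$, which is available precisely because $\Gamma$ is a group rather than merely an additive monoid. A subtlety pervading the entire argument is that $p_{\beta,j},q_{\beta,j},r_{\alpha,i}$ are Laurent polynomials in $t$ rather than scalars, so every cancellation must be performed with an $\mathbb F$-scalar such as $\alpha$, $\beta$ or $\beta-\alpha$, and never with any quantity depending on $t$.
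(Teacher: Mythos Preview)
Your proof is correct and follows essentially the same strategy as the paper's: write $D(M_{\alpha,i})$ and $D(Y_{\alpha+s,i})$ in the form dictated by homogeneity, apply $D$ to the brackets $[L,M]$ and $[Y,Y]$, and solve the resulting coefficient equations. The one tactical difference is in killing the $L$-component $p_{\alpha,i}$: the paper applies $D$ to the specific relation $[L_{-\alpha,-i},M_{\alpha,i}]=\alpha M_{0,0}$ and then invokes centrality of $M_{0,0}$ to force $p_{0,0}=0$, whereas you work with the general bracket $[L_{\alpha,i},M_{\beta,j}]$ and obtain $p_{0,j}=0$ directly from the substitution $\beta=0$ before using $\beta=-\alpha$ --- a slightly more self-contained route that avoids appealing to the center.
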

\begin{proof}
For any $\alpha\in\Gamma$ and  $i\in\Z$, assume
\begin{equation}\label{eeq2.8}
D(M_{\alpha,i})=\psi_{\alpha,i}L_{\alpha,i}+\varphi_{\alpha,i}M_{\alpha,i},
\quad {\rm where}\
\psi_{\alpha,i},\varphi_{\alpha,i}\in
\mathbb{F}[t,t^{-1}].
\end{equation}Applying $D$ to
$[L_{-\alpha,-i},M_{\alpha,i}]=\alpha M_{0,0}$ and using \eqref{eeq2.8},  we
obtain
 $$\alpha(f_{-\alpha}+\varphi_{\alpha,i}-\varphi_{0,0})M_{0,0}+\alpha(2\psi_{\alpha,i}-\psi_{0,0})L_{0,0}=0.$$
Hence,
\begin{equation}\label{eq4}
f_{-\alpha}+\varphi_{\alpha,i}=\varphi_{0,0} \quad {\rm and} \quad
\psi_{\alpha,i}=\frac{1}{2}\psi_{0,0},\quad \forall\ 0\neq \alpha\in \Gamma.
\end{equation}
Since $M_{0,0}$ lies in the center of $\W$,  so does  $D(M_{0,0})$. This forces $\psi_{0,0}=0$ and
 thereby $\psi_{\alpha,i}=0$ for any $\alpha\in\Gamma$ and $i\in\Z$.    It follows from this fact,  \eqref{ff+f} and \eqref{eq4}  that \eqref{eeq2.8} can be simplified as
$D(M_{\alpha,i})=(f_{\alpha}+b)M_{\alpha,i}, {\rm\ where\ } b=\varphi_{0,0}.$ This is the first desired relation.

For arbitrary $\alpha\in\Gamma$ and $i\in\Z$, we assume that
\begin{equation}\label{equ2.10}
D(Y_{\alpha+s,i})=h_{\alpha+s,i}Y_{\alpha+s,i}\quad{\rm for\ some\ }
h_{\alpha+s,i}\in \mathbb{F}[t,t^{-1}].
\end{equation}
 Applying $D$ to \eqref{rel6}
 and by \eqref{equ2.10},
one has
\begin{eqnarray*}
(\beta-\alpha)(h_{\alpha+s,i}+h_{\beta+s,j})M_{\alpha+\beta+2s,i+j}
=(\beta-\alpha)(f_{\alpha+\beta+2s}+b)M_{\alpha+\beta+2s,i+j}.
\end{eqnarray*}
Hence,  $h_{\alpha+s,i}+h_{\beta+s,j}=
f_{\alpha+\beta+2s}+b$ for any  $\alpha\neq\beta\in\Gamma$. In fact, the condition $\alpha\neq \beta$ can be removed, namely,
\begin{equation}\label{eq2.6}
h_{\alpha+s,i}+h_{\beta+s,j}=
f_{\alpha+\beta+2s}+b,\quad \forall\ \alpha, \beta\in\Gamma.
\end{equation}From this we see that $h_{\alpha+s,j}$
is independent of the second subindex $j$, which is written as $h_{\alpha+s}$ for short. Moreover, setting $\beta=\alpha$ in \eqref{eq2.6}
we have
$
h_{\alpha+s}=\frac{1}{2}(f_{2(\alpha+s)}+b),
$ 
this is the second desired relation. The proof is complete.
\end{proof}
For any $f\in{\rm Hom}_\Z(\Gamma,\mathbb F[t,t^{-1}]) $ (the set of group homomorphisms from $\Gamma$ to $\mathbb F[t,t^{-1}])$, we extend it to a map from $T$ to $\mathbb F[t,t^{-1}]$ by
setting $
f_{\gamma}=
\frac{1}{2}f_{2\gamma}
\mbox{ if  }\gamma \in \Gamma_1.
$ 
Then  it is easy to verify the resulting new map  $f$ is a group homomorphism, i.e., $f\in {\rm Hom}_\Z(T,\ \mathbb F[t,t^{-1}])$.

\begin{theo}\label{theo1}Set $$\mathcal{D}^\prime_{{\rm
Hom}_\Z(T,\ \mathbb{F}[t,t^{-1}])}=\{D_\phi\mid \exists f(t)\in\mathbb F[t,t^{-1}]\ {\rm such\ that}\ \phi(\gamma)=\gamma f(t), \forall \gamma\in T\}.$$
Then the derivation space of $\W$ has the following decomposition
 $${\rm
Der\,}{\W}={\rm ad\,}\W+\Big(\mathcal{D}_{{\rm
Hom}_\Z(T,\ \mathbb{F}[t,t^{-1}])}\oplus  \mathcal{D}_{\mathfrak{g}(\Gamma)}\oplus \mathcal{D}_{\mathbb
F[t,t^{-1}]}\oplus \mathcal{D}_{\mathbb{F}[t,t^{-1}]\frac{d}{dt}}\Big)$$ and the first cohomology group ${H}^{1}(\W, \W)$ is linearly isomorphic to $$\big(\mathcal{D}_{{\rm
Hom}_\Z(T,\ \mathbb{F}[t,t^{-1}])}/\mathcal{D}^\prime_{{\rm
Hom}_\Z(T,\ \mathbb{F}[t,t^{-1}])}\big)\oplus  \mathcal{D}_{\mathfrak{g}(\Gamma)}\oplus \mathcal{D}_{\mathbb
F[t,t^{-1}]}\oplus \mathcal{D}_{\mathbb{F}[t,t^{-1}]\frac{d}{dt}}.$$
\end{theo}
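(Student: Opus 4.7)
The plan is to string together Lemmas \ref{lemm2.4}, \ref{lemmas2.4-1}, and \ref{lemmas2.5} to obtain the decomposition, and then add two short arguments: one verifying directness of the four-fold sum, and one computing its intersection with ${\rm ad}\,\W$ to pass to cohomology. By Lemma \ref{lemm2.4}, the task reduces to describing $({\rm Der}\,\W)_0$. For $D\in ({\rm Der}\,\W)_0$, setting $\rho=tf_{0,1}\frac{d}{dt}$ and replacing $D$ by $D-D^\rho$, the analysis preceding Lemma \ref{lemmas2.4-1} yields $f\in {\rm Hom}_\Z(\Gamma,\mathbb{F}[t,t^{-1}])$ and $g\in\mathfrak{g}(\Gamma)$ with $D(L_{\alpha,i})=f_\alpha L_{\alpha,i}+g_\alpha M_{\alpha,i}$, while Lemma \ref{lemmas2.5} furnishes $b\in\mathbb{F}[t,t^{-1}]$ with $D(M_{\alpha,i})=(f_\alpha+b)M_{\alpha,i}$ and $D(Y_{\alpha+s,i})=\frac{1}{2}(f_{2(\alpha+s)}+b)Y_{\alpha+s,i}$. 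Extending $f$ to $\phi\in{\rm Hom}_\Z(T,\mathbb{F}[t,t^{-1}])$ via $\phi(\alpha+s)=\frac{1}{2}f_{2(\alpha+s)}$ as described just before the theorem, direct comparison with \eqref{red2.3}--\eqref{red2.6} yields $D=D_\phi+D_g+D_b+D^\rho$, establishing the first (non-direct-sum) decomposition in the theorem.

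For directness of the four-fold sum, suppose $D_\phi+D_g+D_b+D^\rho=0$. Evaluation on $L_{0,i}$ leaves only $g_0 M_{0,i}+L_0\rho(t^i)$; separating $L$- and $M$-components and varying $i$ forces $g_0=0$ and $\rho=0$. Evaluation on $M_{\alpha,i}$ then yields $\phi(\alpha)+b=0$ for all $\alpha\in\Gamma$; taking $\alpha=0$ gives $b=0$, hence $\phi|_\Gamma=0$. Evaluation on $Y_{\alpha+s,i}$ subsequently forces $\phi|_{\Gamma_1}=0$, so $\phi=0$; finally evaluation on $L_{\alpha,i}$ gives $g=0$.

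For the cohomology, we must compute $\mathcal{I}:={\rm ad}\,\W\cap(\text{four-fold sum})$. Since each member of the four-fold sum has degree $0$ and the center of $\W$ is $\bigoplus_j\mathbb{F}M_{0,j}$, any ${\rm ad}_x\in\mathcal{I}$ may be replaced by ${\rm ad}_{x'}$ with $x'=\sum_j a_j L_{0,j}\in \bigoplus_j\mathbb{F}L_{0,j}$. A direct calculation using \eqref{rel1} and \eqref{rel3} gives ${\rm ad}_{x'}(X_{\mu,i})=\mu f(t) X_{\mu,i}$ where $f(t)=\sum_j a_j t^j$, i.e., ${\rm ad}_{x'}=D_\phi$ with $\phi(\gamma)=\gamma f(t)$; conversely every such $D_\phi$ arises in this way, so $\mathcal{I}=\mathcal{D}^\prime_{{\rm Hom}_\Z(T,\mathbb{F}[t,t^{-1}])}$. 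Combined with directness, dividing then produces the stated formula for $H^1(\W,\W)$.

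The main obstacle I anticipate is the directness step: since $D_\phi$ and $D_b$ both scale $M_{\alpha,i}$ and $Y_{\alpha+s,i}$, disentangling the four contributions requires carefully ordered evaluations (starting with $L_{0,i}$ to remove $\rho$ and $g_0$, then $M_{\alpha,i}$ to pin down $b$, and so on). Once directness is secured, the identification $\mathcal{I}=\mathcal{D}^\prime$ follows quickly from the explicit form of the degree-$0$ inner derivations.
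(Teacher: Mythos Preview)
Your proposal is correct and follows essentially the same route as the paper: reduce to $({\rm Der}\,\W)_0$ via Lemma \ref{lemm2.4}, use Lemmas \ref{lemmas2.4-1} and \ref{lemmas2.5} (together with the substitution $\rho=tf_{0,1}\frac{d}{dt}$) to write any degree-zero derivation as $D_\phi+D_g+D_b+D^\rho$, then verify directness by evaluation on basis elements and compute the intersection with ${\rm ad}\,\W$. Your directness argument is a spelled-out version of what the paper dismisses as immediate, and your intersection argument is slightly more streamlined than the paper's (you compute ${\rm ad}_{x'}$ directly and recognise it as $D_\phi$ with $\phi(\gamma)=\gamma f(t)$, whereas the paper equates $D_\phi+\sum c_jD_{g_j}+D_b+D^\rho$ with $\sum a_j\,{\rm ad}_{L_{0,j}}$ and compares components), but the logic is the same.
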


\begin{proof}Let us first characterize the derivation algebra $\big({\rm Der\,}\W\big)_0$.
 \begin{clai}$\big({\rm Der\,}\W\big)_0=\mathcal{D}_{{\rm
Hom}_\Z(T,\ \mathbb{F}[t,t^{-1}])}+ \mathcal{D}_{\mathfrak{g}(\Gamma)}+ \mathcal{D}_{\mathbb
F[t,t^{-1}]}+\mathcal{D}_{\mathbb{F}[t,t^{-1}]\frac{d}{dt}}.$ \end{clai} To prove this, take any $D\in \big({\rm Der\,}\W\big)_0$. It follows from Lemmas \ref{lemmas2.4-1}, \ref{lemmas2.5} and the remarks before Lemma \ref{lemmas2.4-1} and this theorem that there exist some $\rho\in \mathbb{F}[t,t^{-1}]\frac{d}{dt},$
\mbox{$f\in{\rm Hom}_\Z(T,\ \mathbb F[t,t^{-1}]),\, g\in \mathfrak{g}(\Gamma)$} and $b\in \mathbb{F}[t,t^{-1}]$ such that the following relations hold:

\begin{eqnarray*}
&&(D-D^\rho)(L_{\alpha,i})=f_{\alpha}L_{\alpha,i}+g_{\alpha}M_{\alpha,i},
\ \ \ \
(D-D^\rho)(M_{\alpha,i})=(f_{\alpha}+b)M_{\alpha,i},\\
&&(D-D^\rho)(Y_{\alpha+s,i})=\frac{1}{2}
(f_{2(\alpha+s)}+b)Y_{\alpha+s,i},\quad \forall\ \alpha\in\Gamma,i\in\Z.
\end{eqnarray*}On the other hand, it follows from  the definition of $D_f, D_g$ and $D_b$  that
\begin{eqnarray*}
&&(D_f+D_g+D_b)(L_{\alpha,i})=f_{\alpha}L_{\alpha,i}+g_{\alpha}M_{\alpha,i},
\ \ \ \
(D_f+D_g+D_b)(M_{\alpha,i})=(f_{\alpha}+b)M_{\alpha,i},\\
&&(D_f+D_g+D_b)(Y_{\alpha+s,i})=\frac{1}{2}
(f_{2(\alpha+s)}+b)Y_{\alpha+s,i},\quad \forall\ \alpha\in\Gamma,i\in\Z.
\end{eqnarray*}
Thus, $D=D^\rho+D_f+D_g+D_b$.

\begin{clai}\label{claim2}
The sum ${D}_{{\rm
Hom}_\Z(T,\ \mathbb{F}[t,t^{-1}])}+ \mathcal{D}_{\mathfrak{g}(\Gamma)}+ \mathcal{D}_{\mathbb
F[t,t^{-1}]}+\mathcal{D}_{\mathbb{F}[t,t^{-1}]\frac{d}{dt}}$ is direct.
\end{clai} To prove the claim, let $\mathcal D=D_\phi+\mbox{$\sum
_{j\in J}$} c_jD_{g_j}+D_b+D^\rho$, where $J$ is a finite index set,  $\phi\in {{\rm
Hom}_\Z(T,\ \mathbb{F}[t,t^{-1}])},$ $g_j\in{\mathfrak{g}(\Gamma)}, b\in{\mathbb
F[t,t^{-1}]},  \rho\in{\mathbb{F}[t,t^{-1}]\frac{d}{dt}}$ and $c_j\in\mathbb F.$
Then the claim is equivalent to showing that $\mathcal D=0$   implies $$D_{\phi}=\mbox{$\sum\limits_{j\in J}$}c_jD_{g_j}=D_b=D^\rho=0.$$ But the latter follows immediately 
 by applying $ D$
to the basis elements $L_{\alpha,i}, M_{\al,i}$ and $Y_{\al+s,i}$ of $\W$ for arbitrary $\alpha\in\Gamma$ and $i\in\Z.$ So Claim \ref{claim2} holds.

Now the first statement  follows the Claims 1, 2 and Lemma \ref{lemm2.4}; while for the second statement it suffices  to show the following claim.
\begin{clai}\label{c-l-a-i3}The intersection
${\rm ad\,}\W\cap\Big(\mathcal{D}_{{\rm
Hom}_\Z(T,\ \mathbb{F}[t,t^{-1}])}\oplus  \mathcal{D}_{\mathfrak{g}(\Gamma)}\oplus \mathcal{D}_{\mathbb
F[t,t^{-1}]}\oplus \mathcal{D}_{\mathbb{F}[t,t^{-1}]\frac{d}{dt}}\Big)$ is equal to ${\rm ad\,}\W\cap\mathcal{D}_{{\rm
Hom}_\Z(T,\ \mathbb{F}[t,t^{-1}])}=\mathcal{D}^\prime_{{\rm
Hom}_\Z(T,\ \mathbb{F}[t,t^{-1}])}.$
\end{clai}
To prove the claim, let $
D\in \mathcal{D}_{{\rm
Hom}_\Z(T,\ \mathbb{F}[t,t^{-1}])}\oplus  \mathcal{D}_{\mathfrak{g}(\Gamma)}\oplus \mathcal{D}_{\mathbb
F[t,t^{-1}]}\oplus \mathcal{D}_{\mathbb{F}[t,t^{-1}]\frac{d}{dt}}$ have the form $D_{\phi}+\mbox{$\sum
_{j\in J}$} c_jD_{g_j}+D_b+D^\rho$
as in Claim \ref{claim2}.
If $
D\in{\rm ad\,}\W$, then  we may assume
$
D=\mbox{$\sum
_{j\in\Z}$}a_{j\,}{\rm ad}_{L_{0,j}}$
 for which only finitely many of $a_i$ are nonzero, since  $
 D$ is of degree zero and
 $\oplus_{i\in\Z}
 \mathbb F M_{0,i}$ lies in the center of $\W$. Thus
  \begin{equation}\label{clai-3-0}D_{\phi}+\mbox{$\sum\limits_{j\in J}$} c_jD_{g_j}+D_b+D^\rho
  =\mbox{$\sum\limits_{j\in\Z}$}a_{j\,}{\rm ad}_{L_{0,j}}.\end{equation} Applying both sides of \eqref{clai-3-0} to $L_{\al,i},$ we have $$\phi(\al)L_\al t^i+\mbox{$\sum\limits_{j\in J}$}  c_j{g_j}(\al)M_\al t^i+L_\al\rho(t^i)=\mbox{$\sum\limits_{j\in\Z}$} \al a_jL_\al t^{i+j},$$ which forces
$\mbox{$\sum
_{j\in J}$}c_jD_{g_j}=0$ and $\phi(\al)+t^{-i}\rho(t^i)=\mbox{$\sum
_{j\in\Z}$}\al a_jt^j$. The latter implies $\rho=0$ and thereby
$
\phi(\al)=\mbox{$\sum
_{j\in\Z}$}\al a_jt^j$ for $\al\in\Gamma.$ 
It follows from  applying both sides of \eqref{clai-3-0} to $M_{\al,i}$ and $Y_{\al+s,i}$  that we respectively  obtain \begin{eqnarray*}
&&\phi(\al)+b=\mbox{$\sum\limits_{j\in\Z}$}\al a_jt^j,
\ \ \ \ 
\phi(\al+s)+\frac12 b=\mbox{$\sum\limits_{j\in\Z}$}(\al+s) a_jt^j,\quad \forall\ \al\in\Gamma, i\in\Z.
\end{eqnarray*}Combining these three relations gives $b=0$ and $\phi(\gamma)=\gamma f(t)$ for any $\gamma\in T$, where
$f(t)=\mbox{$\sum
_{j\in\Z}$} a_jt^j\in\mathbb F[t,t^{-1}].$  To sum up, we have obtained $D=D_\phi=\mbox{$\sum
_{j\in\Z}$}a_j{\rm ad}_{L_{0,j}}$ with $\phi(\gamma)=\gamma f(t)$ for any $\gamma\in T.$ This is the Claim \ref{c-l-a-i3}.
\end{proof}

\section{Automorphism group of $\W$}
In this section, we will determine the automorphism group of $\W$,
which is denoted by ${\rm Aut\,}\W$. Denote
\begin{eqnarray*}&&
\mathcal{L}(\Gamma)=
\raisebox{-5pt}{${}^{\, \, \displaystyle\oplus}_{\alpha\in\Gamma}$}
L_\alpha\otimes \mathbb F[t,t^{-1}],\ \mathcal{M}(\Gamma)=
\raisebox{-5pt}{${}^{\, \, \displaystyle\oplus}_{\alpha\in\Gamma}$}
M_\alpha\otimes \mathbb F[t,t^{-1}],\ \mathcal{Y}(\Gamma,s)= \raisebox{-5pt}{${}^{\, \, \displaystyle\oplus}_{\alpha\in\Gamma}$}
Y_{\alpha+s}\otimes \mathbb F[t,t^{-1}].
\end{eqnarray*}
Obviously, $\mathcal{L}(\Gamma)$ is the centerless generalized loop Virasoro
algebra, and  $\mathcal{M}(\Gamma)$ is an ideal of $\mathcal{L}(\Gamma).$ It is also worthwhile to point out that
 $\mathcal{I}(\Gamma,s):=\mathcal{M}(\Gamma)\oplus \mathcal{Y}(\Gamma,s)$ is the unique maximal ideal of
$\W$ whose center is exactly $\mathcal M(\Gamma)$.

Denote  $\mathbb{F}^*=\mathbb F\setminus\{0\}.$ Set $A=\{a\in
\mathbb{F}^\ast\,|\,a{\Gamma}={\Gamma},aT=T\}$.  Then $A$ is a
multiplicative subgroup of $\mathbb{F}^*$.

\begin{theo}\label{theo3.2}Let
$\Gamma^\prime$ be another proper additive subgroup of $\F$ and $s'\notin\Gamma'$ with $2s'\in\Gamma'$. Set $T^\prime=\Gamma^\prime\cup(\Gamma^\prime+s^\prime).$
Then $\W\cong \mathscr W(\Gamma^\prime,s^\prime)$ if and only if there exists $a\in
\mathbb{F}^*$ such that $a\Gamma^\prime=\Gamma$ and  $a\Gamma_1^\prime=\Gamma_1$.
\end{theo}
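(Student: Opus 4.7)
The plan is to handle the two implications separately: for sufficiency, construct an explicit isomorphism; for necessity, exploit the unique maximal ideal structure, reduce to the loop Virasoro quotient, and then extract the second condition via weight analysis.

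For \emph{sufficiency}, given $a\in\mathbb{F}^*$ with $a\Gamma'=\Gamma$ and $a\Gamma_1'=\Gamma_1$, I would define $\varphi:\mathscr{W}(\Gamma',s')\to\W$ on basis elements by
$$\varphi(L_{\alpha',i})=a^{-1}L_{a\alpha',i},\quad \varphi(M_{\alpha',i})=aM_{a\alpha',i},\quad \varphi(Y_{\alpha'+s',i})=Y_{a(\alpha'+s'),i}.$$
The conditions $a\alpha'\in\Gamma$ and $a(\alpha'+s')-s\in\Gamma$ ensure the indices land correctly, so $\varphi$ is a well-defined linear bijection. The scalars $a^{-1},a,1$ are calibrated so the $[Y,Y]=M$ relation \eqref{rel6} balances (yielding $1^2\cdot a=a$), while \eqref{rel1}--\eqref{rel5} match automatically; verifying the six bracket relations is routine.

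For \emph{necessity}, let $\sigma:\W\to\mathscr{W}(\Gamma',s')$ be an isomorphism. Since $\mathcal{I}(\Gamma,s)$ is the unique maximal ideal of $\W$ and $\mathcal{M}(\Gamma)$ is its center (as noted before the theorem), $\sigma(\mathcal{I}(\Gamma,s))=\mathcal{I}(\Gamma',s')$ and $\sigma(\mathcal{M}(\Gamma))=\mathcal{M}(\Gamma')$. Hence $\sigma$ descends to an isomorphism $\bar\sigma:\mathcal{L}(\Gamma)\to\mathcal{L}(\Gamma')$ of centerless generalized loop Virasoro algebras. By the classification of such isomorphisms (cf.~\cite{WWY}), or by arguing directly that the ad-semisimplicity of $\bar\sigma(L_{0,0})$ with spectrum $\Gamma$ on $\mathcal{L}(\Gamma')$ forces $\bar\sigma(L_{0,0})=aL_{0,0}$ for some $a\in\mathbb{F}^*$ with $a\Gamma'=\Gamma$, we obtain the first of the two claimed conditions.

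To pin down $a\Gamma_1'=\Gamma_1$, write $\sigma(L_{0,0})=aL_{0,0}+x$ with $x\in\mathcal{I}(\Gamma',s')$. Using $[\mathcal{I}(\Gamma',s'),\mathcal{M}(\Gamma')]=0$ and $[\mathcal{I}(\Gamma',s'),\mathcal{Y}(\Gamma',s')]\subset\mathcal{M}(\Gamma')$, the operator $\mathrm{ad}\,\sigma(L_{0,0})$ acts on $M_{\beta',j}$ by the scalar $a\beta'$ exactly, and on $Y_{\beta'+s',j}$ as $a(\beta'+s')Y_{\beta'+s',j}+m$ with $m\in\mathcal{M}(\Gamma')$. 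Since the $\mathcal{M}$-eigenvalues $a\gamma'$ for $\gamma'\in\Gamma'$ are disjoint from $a(\beta'+s')$ (as $\Gamma'\cap\Gamma_1'=\emptyset$), the correction $m$ can be absorbed into a genuine eigenvector of eigenvalue $a(\beta'+s')$. Hence the spectrum of $\mathrm{ad}\,\sigma(L_{0,0})$ equals $\Gamma\cup a\Gamma_1'$, and matching with $T=\Gamma\cup\Gamma_1$ (using $\Gamma\cap\Gamma_1=\emptyset$) yields $a\Gamma_1'=\Gamma_1$. I expect the main obstacle to be establishing $\bar\sigma(L_{0,0})=aL_{0,0}$ rather than a combination involving $L_{0,k}$ for $k\neq 0$ (which would produce a nontrivial shift in the $t$-degree and destroy ad-semisimplicity, because an infinite shift operator admits no scalar eigenvectors within $\W$); once this rigidity is in hand, the spectral bookkeeping above closes the argument.
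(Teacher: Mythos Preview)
Your proposal is correct and follows essentially the same approach as the paper: both reduce necessity to the loop Virasoro quotient via the unique maximal ideal $\mathcal{I}(\Gamma,s)$ (citing \cite{WWY} for $\bar\sigma(L_{0,0})=aL_{0,0}$ and $a\Gamma'=\Gamma$), and then obtain $a\Gamma_1'=\Gamma_1$ by weight analysis with respect to $\mathrm{ad}\,L_{0,0}$. The only cosmetic difference is that the paper tracks the $\mathcal{Y}$-component of $\sigma(Y_{\alpha+s,i})$ as an eigenvector for the standard $L_{0,0}$ on the target, whereas you compute the spectrum of $\mathrm{ad}\,\sigma(L_{0,0})$ on the target directly; these are dual formulations of the same computation, and your explicit isomorphism for sufficiency is exactly what the paper's ``clear'' hides.
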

\begin{proof} Let  $\sigma: \W\rightarrow \mathscr
W(T^\prime)$ be an isomorphism of Lie algebras.  Then
$\sigma\big(\mathcal{I}(\Gamma,s)\big)=\mathcal{I}(\Gamma^\prime,s^\prime)$, and $\sigma$ induces an isomorphism
$\bar\sigma$ of the generalized loop Witt algebras:
\begin{eqnarray*}
\overline{\sigma}:\mathcal L(\Gamma)=\W/\mathcal{I}(\Gamma,s)\rightarrow \mathscr
W(T^\prime)/\mathcal{I}(\Gamma^\prime,s^\prime)=\mathcal L(\Gamma^\prime).
\end{eqnarray*}
It follows from the arguments in  \cite[Section 3]{WWY} that there exists $a\in \mathbb{F}^*$
such that $a\Gamma^\prime=\Gamma,$ and $
\overline{\sigma}(L_{\al,i})=L_{\frac{\al}{a}}\,f_{\al,i}(t)
$ 
for some $f_{\al,i}(t)\in\mathbb{F}[t,t^{-1}]$ such that $f_{0,0}(t)=a$.
\setcounter{clai}{0}
\begin{clai}\label{3clai1}
$a\Gamma^\prime=\Gamma.$
\end{clai}
To prove this, for any $\al\in\Gamma$ and $i\in\Z,$ we can assume that
\begin{equation*}
\sigma(L_{\al,i})=L_{\frac{\al}{a}}\,f_{\al,i}(t)+m_{\alpha, i}+y_{\alpha, i}\quad {\rm for\ some\ }m_{\alpha, i}\in\mathcal M(\Gamma^\prime),
y_{\alpha, i}\in\mathcal Y(\Gamma^\prime,s^\prime).
\end{equation*}Note that $\sigma$ maps the center of $\mathcal I(\Gamma,s)$ onto the center of $\mathcal I(\Gamma^\prime,s^\prime)$, namely, $\sigma \big(\mathcal M(\Gamma)\big)=\mathcal M(\Gamma^\prime). $
It follows from this fact and by applying $\sigma$ to $[L_{0,0},M_{\al,i}]=\al M_{\al,i}$, we have
$$[L_{0,0},\sigma(M_{\al,i})]=\frac{\al}{a}
\sigma(M_{\al,i})\mbox{ for any $\alpha\in\Gamma$ and $i\in\Z$,}$$ which implies
$\sigma(M_{\al,i})\in {\mathscr W(T^\prime)}_\frac{\al}{a}$. In particular, $\frac{\al}{a}\in\Gamma^\prime$ and therefore
 $\Gamma\subseteq a\Gamma^\prime$. Similarly, one has
$a{\Gamma^\prime}\subseteq{\Gamma}$. Hence, $a\Gamma^\prime=\Gamma,$ proving Claim \ref{3clai1}.

\begin{clai}\label{3clai2}
$a{\Gamma^\prime_1}={\Gamma_1}.$
\end{clai}
To prove this, for any $\alpha\in\Gamma$, $i\in\Z$, since
$\sigma(\mathcal{Y}(\Gamma,s))=\mathcal{M}(\Gamma^\prime)\oplus\mathcal{Y}(\Gamma^\prime,s^\prime),$
we may write \begin{equation}\label{eq-3.1}
\sigma(Y_{\alpha+s,i})=\sigma(Y_{\alpha+s,i})_{\mathcal M}+\sigma(Y_{\alpha+s,i})_{\mathcal Y},
\end{equation}
for some $\sigma(Y_{\alpha+s,i})_{\mathcal M}\in\mathcal M(\Gamma^\prime),$
$  \sigma(Y_{\alpha+s,i})_{\mathcal Y}\in\mathcal Y(\Gamma^\prime,s^\prime).$
Applying $\sigma$ to $[L_{0,0},Y_{\alpha+s,i}]=$ \linebreak $(\alpha+s)
Y_{\alpha+s,i},$ one has $$
\big[aL_{0,0}+m_{0,0}+y_{0,0},\sigma(Y_{\alpha+s,i})\big]=(\alpha+s)\sigma(Y_{\alpha+s,i}),\quad \forall\ \alpha\in\Gamma, i\in\Z.$$
 Then it follows from relations
\eqref{rel3}--\eqref{rel6} and invoking \eqref{eq-3.1} that
$\big[L_{0,0}, \sigma(Y_{\alpha+s,i}\big)_{\mathcal
Y}\big]=$\linebreak $\frac{\al+s}{a}\sigma\big(Y_{\alpha+s,i}\big)_{\mathcal
Y},$
 which forces
$\frac{\al+s}{a}\in\Gamma^\prime_1$ and therefore $\Gamma_1\subset
a\Gamma^\prime_1$.  Then $a{\Gamma^\prime_1}={\Gamma_1}$,  since the other inclusion $a\Gamma^\prime_1\subset\Gamma_1$ can be obtained similarly.
 This proves Claim \ref{3clai2}.

The ``only if\," part follows directly from Claims \ref{3clai1} and \ref{3clai2}, while the ``if\," part is clear.\end{proof}

For any $\sigma\in {\rm Aut\,}\W,$ let us first take a careful look at what the images of the basis elements of $\W$ under the $\sigma$.   It follows from the proof of Theorem \ref{theo3.2} for any $\alpha\in \Gamma$ and $
i\in\Z,$ we may assume  that
\begin{eqnarray}\label{equation3.2}\!\!\!\!\!\!\!\!\!\!
&\!\!\!\!\!\!\!\!&\sigma(L_{\al,i})=L_{\frac{\al}{a}}\,f_{\al,i}+m^L_{\alpha,i}+y^L_{\alpha,i},
\ \
\sigma(M_{\al,i})=M_{\frac{\al}{a}}\,g_{\al,i},
\ \
\label{equation3.4}
\sigma(Y_{\al+s,i})=Y_{\frac{\al+s}{a}}\,g_{\al+s,i}+m^Y_{\alpha,i},
\end{eqnarray}
where    $m^L_{\alpha,i}, m^Y_{\alpha,i}\in \mathcal{M}(\Gamma),
y^L_{\alpha,i}\in \mathcal{Y}(\Gamma,s)$ and $f_{\al,i},g_{\al,i},g_{\al+s,i}\in \F[t,t^{-1}]$ with $f_{0,0}=a\in\mathbb F^*$. By \eqref{equation3.2}
and applying
$\sigma$ to the first relation in \eqref{rel1}, \eqref{rel3} and \eqref{rel6} respectively,  we have
\begin{eqnarray*}
&&a(\beta-\alpha)f_{\al+\beta,i+j}=(\beta-\alpha)f_{\al,i}
f_{\beta,j},\\
&&a(\beta+s-\frac{\alpha}{2})g_{\al+\beta+s,i+j}=(\beta+s-\frac{\alpha}{2})f_{\al,i}
g_{\beta+s,j},
\\
&&a(\beta-\alpha)g_{\al+\beta+2s,i+j}=(\beta-\alpha)g_{\al+s,i}
g_{\beta+s,j},\quad \forall\ \alpha,\beta \in \Gamma, i, j\in
\mathbb{Z}.
\end{eqnarray*}
Furthermore, by using the same arguments as in the proof of  \cite[Lemma
2.4]{WWY}, we have
\begin{eqnarray}\label{L-1}
&af_{\al+\beta,i+j}=f_{\al,i}f_{\beta,j},
\ \ \ \
ag_{\al+\beta+s,i+j}=f_{\al,i} g_{\beta+s,j},
\ \ \ \
ag_{\al+\beta+2s,i+j}=g_{\al+s,i}g_{\beta+s,j}
\end{eqnarray}
for any $\alpha,\beta \in \Gamma$ and $ i, j\in \mathbb{Z}.$
 Denote by  $\F[t,t^{-1}]^*=\{at^i\,|\,0\neq a\in\mathbb{F},\,i\in\Z\}$ the set of multiplicative invertible
elements of $\mathbb{F}[t,t^{-1}]$, which
 is a multiplicative group. It follows from the first and third relations of
\eqref{L-1} 
and by noticing both  $f_{0,0}$ and $g_{0,0}$ are nonzero elements of $\mathbb F$ that both $\frac{f_{\al,i}}{a}$ and $\frac{g_{\al+s,i}}{a}$ lie in $\F[t,t^{-1}]^*.$ Hence, by  the third formula in \eqref{L-1} again,  $g_{\alpha,i}\in\F[t,t^{-1}]^*,$ for any $\alpha\in\Gamma$ and $i\in\Z$.
These  entail us to write
\begin{equation*}
f_{\al,i}={a}\mu_1(\al,i)t^{\varepsilon_1(\al,i)}\quad {\rm and
}\quad
g_{\gamma,i}=a\mu^\prime(\gamma,i)t^{\varepsilon^\prime(\gamma,i)},\quad \forall\ \al\in\Gamma, \gamma\in T, i\in\Z,
\end{equation*}
where $\mu_1(\al,i),\mu^\prime(\gamma,i)\in\mathbb{F}^*,$
$\varepsilon_1(\al,i),\varepsilon^\prime(\gamma,i)\in\Z$.
Applying $\sigma$ to the second relation in \eqref{rel1}, we have $a\beta
g_{\al+\beta,i+j}=\beta f_{\al,i}g_{\beta,j}.$ From this and the second and third relations in \eqref{L-1}, 
one can deduce  $a
g_{\al+\beta,i+j}= f_{\al,i} g_{\beta,j}$ for $
\alpha,\beta\in\Gamma,i,j\in\Z,$ which implies
\begin{equation*}
\mu^\prime(\al,i)=c\mu_1(\al,i)\ (c=a^{-1}g_{0,0})\quad {\rm and}\quad
\varepsilon^\prime(\al,i)=\varepsilon_1(\alpha,i),  \quad \forall\ \al\in\Gamma, i\in\Z
\end{equation*}by taking $\beta=j=0.$
Set
\begin{equation}\label{definedn3.11}
\mu(x, i)=\left\{\begin{array}{llll}\mu_1(x,i),&\mbox{if \ }x\in \Gamma,\\[4pt]
c^{-\frac{1}{2}}\mu^\prime(x,i),&\mbox{if \ }x\in \Gamma_1,
\end{array}\right.
\mbox{ 
and
\ }
\varepsilon(x,i)=\left\{\begin{array}{llll}\varepsilon_1(x,i),&\mbox{if \ }x\in \Gamma,\\[4pt]
\varepsilon^\prime(x,i),&\mbox{if \ }x\in \Gamma_1.
\end{array}\right.\end{equation} Then for any $\gamma_1,\gamma_2\in T$ and $i,j\in\Z$, \begin{equation}\label{muand-varepsilon}\mu(\gamma_1,i)\mu(\gamma_2,j)=\mu(\gamma_1+\gamma_2,i+j)\quad {\rm and} \quad\varepsilon(\gamma_1,i)+\varepsilon(\gamma_2,j)=\varepsilon(\gamma_1+\gamma_2,i+j).\end{equation}
Whence the formulas in \eqref{equation3.2} 
 can be respectively rewritten as
\begin{eqnarray}\label{definedn3.13}
&&\sigma(L_{\al,i})={a}\mu(\al,i)L_\frac{\al}{a}t^{\varepsilon(\al,i)}+m^L_{\alpha,i}+y^L_{\alpha,i},
\ \ \ \
\sigma(M_{\al,i})={ac}\mu(\al,i)M_\frac{\al}{a}t^{\varepsilon(\al,i)},\\
\label{definedn3.15}
&&\sigma(Y_{\al+s,i})={ac^{\frac{1}{2}}}\mu(\al+s,i)Y_\frac{\al+s}{a}t^{\varepsilon(\al+s,i)}+m^Y_{\alpha,i}.
\end{eqnarray}

Motivated by the formulas \eqref{definedn3.13} and \eqref{definedn3.15} we are next going to define six kinds of automorphisms of $\W$.
For any $a\in A,$  define an automorphism of $\W$ as follows:
\begin{equation*}
\sigma_a(L_{\al,i})=aL_{\frac{\al}{a},i}, \quad
\sigma_a(M_{\al,i})=aM_{\frac{\al}{a},i}, \quad
\sigma_a(Y_{\al+s,i})=aY_{\frac{\al+s}{a},i},\quad \forall\ \al\in\Gamma,i\in\Z.
\end{equation*}

Given  any $\varphi\in {\rm {\rm Hom}}_\Z(T,\Z)$, the set of group homomorphisms
from $T$ to $\Z$,  we
can define an automorphism of $\W$ as follows:
\begin{equation*}
\varphi(L_{\al,i})=L_{\al,i+\varphi(\al)}, \quad
\varphi(M_{\al,i})=M_{\al,i+\varphi(\al)}, \quad
\varphi(Y_{\al+s,i})=Y_{\al+s,i+\varphi(\al+s)},\quad \forall\ \al\in\Gamma,i\in\Z.
\end{equation*}

 For  any $\chi\in\chi(T)$, the set of all group homomorphisms from $T$ to $\mathbb F^*$,   and $c\in\mathbb F^*$, define an automorphism $\sigma^{\chi}_c$
of $\W$ as follows:
\begin{equation*}
\sigma^{\chi}_c(L_{\al,i})=\chi(\al)L_{\al,i}, \
\sigma^{\chi}_c(M_{\al,i})=c\chi(\al)M_{\al,i}, \
\sigma^{\chi}_c(Y_{\al+s,i})=c^{\frac{1}{2}}\chi(\al+s)Y_{\al+s,i},\,\ \forall\ \al\in\Gamma,i\in\Z.
\end{equation*}
Then under the multiplication  given by
$\sigma^{\chi_1}_{c_1}\sigma^{\chi_2}_{c_2}=\sigma^{\chi_1\chi_2}_{c_1c_2}$
for all $c_1,c_2\in \F^*$ and $\chi_1,\chi_2\in\chi(T),$  the set $\{\sigma^{\chi}_c\mid \chi\in \chi(T), c\in\F^*  \}$ forms a subgroup of ${\rm Aut\,} \W$, which is isomorphic to the product group  $\chi(T)\times \F^*$ of groups $\chi(T)$ and $\F^*.$

For any $\phi\in {\rm Aut\,}\Z=\{1,-1\}$, it is easy to see the following linear map given by
\begin{equation*}
{\phi}(L_{\al,i})=L_{\al,\phi(i)}, \quad
{\phi}(M_{\al,i})=M_{\al,\phi(i)}, \quad
{\phi}(Y_{\al+s,i})=Y_{\al+s,\phi(i)},\quad\forall\ \alpha\in \Gamma, i\in \Z
\end{equation*}
is an automorphism of $\W$.

Given an element $b\in \mathbb{F}^*$,  define a linear map $\tau_b\in \F^\times$ as follows:
\begin{equation*}
\tau_{b}(L_{\al,i})=b^iL_{\al,i},\quad
\tau_{b}(M_{\al,i})=b^iM_{\al,i}, \quad
\tau_{b}(Y_{\al+s,i})=b^iY_{\al+s,i}, \quad \forall\ \alpha\in \Gamma,i\in \Z.
\end{equation*}
Then $\tau_b$ is an automorphism of $\W$.

Note from \cite{WWY} that  the set $A\times{{\rm Hom}_\Z(T,\Z)}\times\chi(T)\times\F^*\times{{\rm Aut\,}\Z}\times\F^\times$ becomes a group with  $(1,0,0,1,1,1)$ as its identity element,  whose multiplication is given as follows:
\begin{eqnarray*}\label{eqwww3.1}
&&(a_1,\varphi_1,\chi_1,c_1,\phi_1,b_1)\cdot(a_2,\varphi_2,\chi_2,c_2,\phi_2,b_2) \nonumber\\
&&=(a_1a_2,\varphi_1\iota_{a_2^{-1}}+\phi_1\varphi_2,b_1^{\varphi_2}+\chi_2+\chi_1\iota_{a_2^{-1}},c_1c_2,\phi_1\phi_2,b_1^{\phi_2}b_2),
\end{eqnarray*}
where  $\iota_a$ (for $a\in A$)   is an automorphism of $T$ such that
$
\iota_a(\gamma)=a\gamma\mbox{ for }
\gamma\in T,
$ 
and  for any $b\in \F^*$ and $\varphi\in$ ${\rm Hom}_\Z(T,\Z)$,
$
(b^{\varphi})(\gamma)=b^{\varphi(\gamma)}\mbox{ for }
\gamma\in T.
$ 
So in what follows we can  identify  $A\times{{\rm Hom}_\Z(T,\Z)}\times\chi(T)\times\F^*\times{{\rm Aut\,}\Z}\times\F^\times$ as a subgroup of ${\rm Aut}\ \W$.

Denote
\begin{eqnarray*}
 \mathcal{E}:=
\left\{\underline e=(e^k)_{k\in\Z}\ \bigg|\ \begin{aligned} &{\rm all\ but\ finitely\ many}\ e^k\ {\rm are\ zero, where}\ e^k: \Gamma\times \Z\rightarrow \mathbb F\ {\rm such} \\[-5pt] &{\rm that}\  (\beta-\alpha)e_{\alpha+\beta,i+j}^k=\beta e_{\beta,j}^{k-i} -\alpha e_{\alpha,i}^{k-j},\ \forall\, \alpha,\beta\in\Gamma,
i,j\in\Z\end{aligned}\right\}.
\end{eqnarray*}
Then one can verify that the linear map $\psi_{\underline e}:\W\rightarrow \W$ defined by
\begin{equation}\label{psi_{e}}
L_{\alpha,i}\rightarrow L_{\alpha,i}+\mbox{$\sum\limits_{j\in\Z}$}e_{\alpha,i}^j M_{\alpha,j}, \quad
M_{\alpha,i} \rightarrow M_{\alpha,i}, \quad Y_{\alpha+s,i}
\rightarrow Y_{\alpha+s,i}
\end{equation}
is an automorphism of $\W$. Note that $\mathcal{E}$ carries the semigroup structure under the natural additive operation $\underline e+\underline c=(e^k+c^k)_{k\in\Z}$ for any $\underline e=(e^k)_{k\in\Z},\underline c=(c^k)_{k\in\Z}\in\mathcal E$. Thus,  $\Psi=\{\psi_{\underline e}\mid \underline e\in
\mathcal{E}\}$ forms a subgroup of ${\rm Aut\,}\W$, since
$\psi_{\underline e} \psi_{\underline c} =\psi_{\underline e+\underline c}$ for any
$\underline e,\underline c\in \mathcal{E}.$

Let ${\rm Inn\,}\W$ be the subgroup of  ${\rm Aut\,}\W$ generated by all {\em inner automorphisms} ${\rm exp\,} ({\rm ad} x)$ for all $x\in\W$ such that each ${\rm ad }x$ is locally nilpotent.
 It is not hard to see that all automorphisms defined above are not inner.  Now we are ready to characterize the automorphism group ${\rm Aut}\, \W$.

\begin{theo}\label{theo3.4}We have the following isomorphism
$$
{\rm Aut\,}\W\simeq((A\times{\rm Hom}_\Z(T,\Z)\times\chi(T)\times \F^*\times{{\rm
Aut\,}\Z}\times\mathbb{F}^\times)\ltimes\Psi)\ltimes  {\rm Inn\,}\W.
$$
\end{theo}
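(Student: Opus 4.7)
The plan is to show that every $\sigma\in{\rm Aut}\,\W$ factors uniquely as $\sigma=\eta\cdot\psi_{\underline e}\cdot\iota$ with $\eta$ in the six-factor group, $\psi_{\underline e}\in\Psi$, and $\iota\in{\rm Inn}\,\W$, and then to verify the normality and triviality of intersections that yield the iterated semidirect product. The ``if'' direction is automatic since each of the maps in the listed groups has already been verified to be an automorphism.

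First I would extract the diagonal data already present in \eqref{definedn3.13}--\eqref{definedn3.15}. The bi-homomorphism identities \eqref{muand-varepsilon} force $\mu(\gamma,i)=\chi(\gamma)\,b^i$ and $\varepsilon(\gamma,i)=\varphi(\gamma)+\phi(i)$, giving $\chi:=\mu(\cdot,0)\in\chi(T)$, $b:=\mu(0,1)\in\F^\times$, $\varphi:=\varepsilon(\cdot,0)\in{\rm Hom}_\Z(T,\Z)$, and $\phi:=\varepsilon(0,\cdot)\in{\rm Aut}\,\Z$ (the last by surjectivity of $\sigma$); meanwhile $a\in A$ comes from Theorem \ref{theo3.2} and $c:=a^{-1}g_{0,0}\in\F^*$. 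Setting $\eta:=\sigma_a\cdot\varphi\cdot\sigma^{\chi}_{c}\cdot\phi\cdot\tau_{b}$, a direct comparison shows that $\sigma_1:=\eta^{-1}\sigma$ fixes each $M_{\alpha,i}$ and has the form $\sigma_1(L_{\alpha,i})=L_{\alpha,i}+m^L_{\alpha,i}+y^L_{\alpha,i}$, $\sigma_1(Y_{\alpha+s,i})=Y_{\alpha+s,i}+m^Y_{\alpha,i}$ for suitable remainders in $\mathcal M(\Gamma)$ and $\mathcal Y(\Gamma,s)$.

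Writing $m^L_{\alpha,i}=\sum_{k\in\Z}e^k_{\alpha,i}M_{\alpha,k}$ and applying $\sigma_1$ to the first relation in \eqref{rel1} produces exactly the cocycle identity defining $\mathcal E$, so $\underline e:=(e^k)_{k\in\Z}\in\mathcal E$; then $\sigma_2:=\psi_{\underline e}^{-1}\sigma_1$ kills the $m^L$-component while leaving $M$ and $Y$ undisturbed. It then remains to show $\sigma_2\in{\rm Inn}\,\W$. Decomposing a prospective $z\in\mathcal I(\Gamma,s)$ as $z=y+m$ with $y\in\mathcal Y(\Gamma,s)$, $m\in\mathcal M(\Gamma)$, and using $[\mathcal I(\Gamma,s),\mathcal M(\Gamma)]=0$ together with $({\rm ad}\,z)^3=0$ on the generators, the identity $\sigma_2=\exp({\rm ad}\,z)$ reduces to the system $[y,L_{\alpha,i}]=\widetilde y^L_{\alpha,i}$, $[y,Y_{\alpha+s,i}]=m^Y_{\alpha,i}$, and $[m,L_{\alpha,i}]=-\tfrac12[y,[y,L_{\alpha,i}]]$. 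Applying $\sigma_2$ to \eqref{rel3} and \eqref{rel6} supplies the compatibility identities needed to assemble $y$ from the coefficients of $\widetilde y^L$ and $m^Y$ simultaneously, after which $m$ is uniquely determined. \emph{This is where the main obstacle lies:} the coefficient data is a large family indexed by $(\alpha,i)$, and the Jacobi-type constraints coming from \eqref{rel3}--\eqref{rel6} must be shown to collapse it consistently to a single element $z$ with ${\rm ad}\,z$ locally nilpotent, along with a compatibility between the $\mathcal Y$- and $\mathcal M$-parts imposed by $[Y,Y]=M$.

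Finally, the law of the six-factor group is already recorded in \eqref{eqwww3.1}. A routine computation on basis vectors shows that conjugation by any such $\eta$ sends $\psi_{\underline e}$ to some $\psi_{\underline e'}$ with an explicit reparametrization of $\underline e$, so $\Psi$ is normal in the enlarged group, and ${\rm Inn}\,\W$ is normal in ${\rm Aut}\,\W$ automatically. The pairwise intersections of the three factors are trivial, because only the identity in each factor fixes every $L_{\alpha,i}$, $M_{\alpha,i}$, and $Y_{\alpha+s,i}$, delivering the claimed iterated semidirect product.
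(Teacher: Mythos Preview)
Your plan reverses the order in which the paper peels off the $\Psi$-factor and the inner factor, and that reversal creates a real gap. You write $m^L_{\alpha,i}=\sum_{k}e^k_{\alpha,i}M_{\alpha,k}$, i.e.\ you assume that the $\mathcal M$-component of $\sigma_1(L_{\alpha,i})-L_{\alpha,i}$ is supported entirely at $T$-degree $\alpha$. This is not true in general: for $z=y+m$ with $y\in\mathcal Y(\Gamma,s)$ and $m\in\mathcal M(\Gamma)$, the inner automorphism $\exp({\rm ad}\,z)$ contributes $[m,L_{\alpha,i}]+\tfrac12[y,[y,L_{\alpha,i}]]$ to that $\mathcal M$-component, and these terms sit at degrees $\alpha+\delta$ (for $\delta$ in the support of $m$) and $\alpha+\mu+\nu$ (for $\mu,\nu\in\Gamma_1$ in the support of $y$), not at $\alpha$. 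Consequently your $\underline e$ is not even well-defined as an element of $\mathcal E$. Even if you restricted to the degree-$\alpha$ piece, applying $\sigma_1$ to the first relation in \eqref{rel1} does \emph{not} produce the $\mathcal E$-identity cleanly: the cross-term $[y^L_{\alpha,i},y^L_{\beta,j}]\in\mathcal M(\Gamma)$ contributes to the $M_{\alpha+\beta,*}$-coefficients and spoils the relation $(\beta-\alpha)e^k_{\alpha+\beta,i+j}=\beta e^{k-i}_{\beta,j}-\alpha e^{k-j}_{\alpha,i}$ that you need. For the same reason, after your putative stripping of $\psi_{\underline e}$ the system for $\sigma_2$ is not $[m,L_{\alpha,i}]=-\tfrac12[y,[y,L_{\alpha,i}]]$ but rather has a nonzero right-hand side coming from the off-diagonal part of $m^L_{\alpha,i}$.

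The paper proceeds in the opposite order, and that order is essential. From the expansion of $\tau(L_{0,0})$ in \eqref{eqnarrayyy3.20} it writes down an \emph{explicit} product of three exponentials $\tau'\in{\rm Inn}\,\W$ (one with argument in $\mathcal Y(\Gamma,s)$, two in $\mathcal M(\Gamma)$), chosen so that $\tau'(L_{0,0})$ matches $\tau(L_{0,0})$ up to central terms and $\tau'(Y_{s,i})=\tau(Y_{s,i})$. Only then does it prove---via the eigenvector equation $[\tau(L_{0,0}),\,\tau'(L_{\alpha,i})-\tau(L_{\alpha,i})]=\alpha(\tau'(L_{\alpha,i})-\tau(L_{\alpha,i}))$---that the difference $\tau'(L_{\alpha,i})-\tau(L_{\alpha,i})$ lies in $\bigoplus_k\mathbb F M_{\alpha,k}$ and that $\tau'=\tau$ on every $M_{\beta,j}$ and $Y_{\beta+s,j}$. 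Thus the ``$m^L$ is diagonal in $\alpha$'' property is a conclusion drawn \emph{after} removing the inner part, not a hypothesis one may impose beforehand. Your acknowledged obstacle---solving the overdetermined system $[y,L_{\alpha,i}]=\widetilde y^L_{\alpha,i}$ for a single $y$---is precisely what the paper sidesteps by reading $y$ and $m$ directly off the single element $\tau(L_{0,0})$ and then propagating to all other generators through \eqref{rel3}.
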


\begin{proof} Let $\sigma\in {\rm Aut \,}\W$. Then it follows from \eqref{muand-varepsilon} that the $\mu$ and $\varepsilon$ occur in \eqref{definedn3.13} and \eqref{definedn3.15} can be written as $$\mu(\gamma,i)=b^i\chi(\gamma)\quad {\rm and}\quad \varepsilon(\gamma,i)=\phi(i)+\varphi(\gamma)$$ for some $b\in\mathbb F^*$, $\chi\in\chi(T)$, $\phi\in{\rm Aut}\,\Z$ and $\varphi\in {\rm {\rm Hom}}_\Z(T,\Z).$
This 
entails us to rewrite \eqref{definedn3.13} and \eqref{definedn3.15} respectively as follows:
\begin{eqnarray}\label{eqna3.9}
&&\sigma(L_{\alpha,i})={a}{\chi(\al)}{b}^{i}L_{{\frac{\al}{a}},{\phi(i)+\varphi(\al)}}+m^L_{\alpha,i}+y^L_{\alpha,i},\\
\label{eqna3.10}
&&\sigma(M_{\alpha,i})=ac{\chi(\al)}{b}^{i}M_{{\frac{\al}{a}},{\phi(i)+\varphi(\al)}},\\
\label{eqna3.11}
&&\sigma(Y_{\alpha+s,i})=ac^{\frac{1}{2}}{\chi(\al+s)}{b}^{i}Y_{{\frac{\al+s}{a}},{\phi(i)+\varphi(\alpha+s)}}+m^Y_{\alpha,i},
\end{eqnarray}
where  $a\in A$, $c\in\F^*$, $m^L_{\alpha,i},
m^Y_{\alpha,i}\in \mathcal{M}(\Gamma)$ and $y^L_{\alpha,i}\in
\mathcal{Y}(\Gamma,s).$ Set
\begin{equation}\label{def-tau}\tau=(\tau_b)^{-1}\phi^{-1}(\sigma_{c}^\chi)^{-1}\varphi^{-1}(\sigma_a)^{-1}\sigma. \end{equation}   Then it follows from \eqref{eqna3.9}$-$\eqref{eqna3.11} and the definitions of $\tau_b, \phi, \sigma_{c}^\chi, \varphi$ and $\sigma_a$ that
\begin{eqnarray}\label{eq3.4}
&\!\!\!\!\!\!\!\!\!\!\!\!&\tau(L_{\alpha,i})=L_{\alpha,i}+\mathfrak{m}^L_{\alpha,i}+\mathfrak{y}^L_{\alpha,i},
\ \ \ \ \
\tau(M_{\alpha,i})=M_{\alpha,i},
\ \ \ \ \
\tau(Y_{\alpha+s,i})=Y_{\alpha+s,i}+\mathfrak{m}^Y_{\alpha,i},
\end{eqnarray}
where $
\mathfrak{m}^L_{\alpha,i},\mathfrak{m}^Y_{\alpha,i}\in \mathcal{M}(\Gamma)$ and
$\mathfrak{y}^L_{\alpha,i}\in \mathcal{Y}(\Gamma,s).$

Assume
\begin{eqnarray}\label{eqnarrayyy3.20}
&&\tau(L_{0,0})=L_{0,0}+\mbox{$\sum\limits_{\alpha\in\Gamma, j\in\Z}$}(a_{\alpha,j}M_{\alpha
,j}
+b_{\alpha,j}Y_{\alpha+s,j}),\\
&&\label{eqnarrayyy3.21}
\tau(Y_{s,i})=Y_{s,i}+\mbox{$\sum\limits_{\beta\in\Gamma, k\in\Z}$}c_{\beta,k}^iM_{\beta,k},
\end{eqnarray} for which only finitely many of $a_{\al,j}, b_{\al,j}$ and $c_{\beta,k}^i$ for all $\al,\beta\in\Gamma,i,j,k\in\Z$ are nonzero.
Applying $\tau$ to $[L_{0,0},Y_{s,i}]=s Y_{s,i}$, we get
\begin{eqnarray*}
&&\Big[L_{0,0}+\mbox{$\sum\limits_{\alpha\in\Gamma, j\in\Z}$}(a_{\alpha,j}M_{\alpha
,j}+b_{\alpha,j}Y_{\alpha+s,j}),Y_{s,i}+\mbox{$\sum\limits_{\beta\in\Gamma, k\in\Z}$}c_{\beta,k}^iM_{\beta,k}\Big]\\
&&=
s Y_{s,i}+\mbox{$\sum\limits_{\beta\in\Gamma, k\in\Z}$}\beta c_{\beta,k}^iM_{\beta,k}-\mbox{$\sum\limits_{\alpha\in\Gamma, j\in\Z}$}\alpha b_{\alpha,j}M_{\alpha+2s
,j+i}\\
&&= s Y_{s,i}+s\mbox{$\sum\limits_{\beta\in\Gamma, k\in\Z}$}c_{\beta,k}^iM_{\beta,k}.
\end{eqnarray*}
Thus we conclude
$
\mbox{$\sum
_{\beta\in\Gamma, k\in\Z}$}(\beta-s)c_{\beta,k}^iM_{\beta,k}=
\mbox{$\sum
_{\alpha\in \Gamma,j\in\Z}$}\alpha b_{\alpha,j}M_{\alpha+2s,j+i},
$ 
which is equivalent to saying
\begin{equation*}
c_{\alpha+2s,j+i}^i=\frac{\alpha}{\alpha+s}b_{\alpha,j},\quad\forall\ \al\in\Gamma,i,j\in\Z.
\end{equation*}
Then \eqref{eqnarrayyy3.21} can be written as
\begin{equation}\label{eq3.7}
\tau(Y_{s,i})=Y_{s,i}+\mbox{$\sum\limits_{\alpha\in\Gamma, j\in\Z}$}\frac{\alpha}{\alpha+s}b_{\alpha,j}M_{\alpha+2s,j+i}.
\end{equation}
 Let
\begin{eqnarray*}\label{eqqqqa3.23}
\tau^\prime&\!\!=&\!\!{\rm exp\, }{\rm ad}\Big(\mbox{$\sum\limits_{\alpha\neq\beta\in \Gamma, \al+\beta+2s\neq0,i,j\in\Z}$}\frac{-b_{\alpha,i}b_{\beta,j}(\beta-\alpha)}{2(\alpha+s)(\alpha+\beta+2s)}
M_{\alpha+\beta+2s,i+j}\Big)
\\
&&\!\!\times\,{\rm exp\,}{\rm ad}\Big(\mbox{$\sum\limits_{0\neq\alpha\in \Gamma,j\in\Z}$}\frac{-a_{\alpha,j}}{\alpha}M_{\alpha,j}\Big){\rm exp\, }{\rm
ad}\Big(-\mbox{$\sum\limits_{\alpha\in \Gamma,j\in\Z}$}\frac{b_{\alpha,j}}{\alpha+s}Y_{\alpha+s,j}\Big)\in {\rm Inn\ }\W.
\end{eqnarray*}
\begin{clai}\label{c-l-a}
We have $\tau^\prime(L_{\al,i})=\tau(L_{\al,i})+\mbox{$\sum
_{j\in\Z}$}e_{\al,i}^jM_{\al,j}$,  $\tau^\prime(Y_{\al+s,i})=\tau(Y_{\al+s,i})$ and $\tau^\prime(M_{\al,i})=M_{\al,i}=\tau(M_{\al,i})$ for any $\al\in\Gamma$ and $i\in\Z$, where $e_{\al,i}^j\in\mathbb F$ for which only finitely many are nonzero for each fixed $\al$.
\end{clai}
To prove the claim, it follows immediately from the construction of $\tau^\prime$ one can easily verify that  \begin{eqnarray}\label{eq-3.23}\!\!\!\!
\tau^\prime(L_{0,0})=\tau(L_{0,0})-\mbox{$\sum\limits_{j\in\Z}$}a^\prime_{0,j}M_{0,j},\ \tau^\prime(Y_{s,i})=\tau(Y_{s,i}),\ \tau^\prime(M_{\al,i})=M_{\al,i},\,\ \forall\ \al\in\Gamma,i\in\Z,
\end{eqnarray}where $a^\prime_{0,j}=a_{0,j}-\mbox{$\sum
_{\alpha\in\Gamma,k\in\Z}$}b_{\al,k}b_{-\al-2s,j-k}.$
Applying $\tau^\prime$ to
$[L_{0,0},L_{\alpha,i}]=\alpha L_{\alpha,i}$ for arbitrary  $\alpha\in \Gamma$ and $i\in\Z,$ we obtain
\begin{eqnarray*}\alpha\tau^\prime(L_{\alpha,i}) &=&\Big[\tau^\prime(L_{0,0}),\tau^\prime(L_{\alpha,i})\Big]=\Big[\tau(L_{0,0}),\tau^\prime(L_{\alpha,i})\Big]
\nonumber\\
&=&\Big[\tau(L_{0,0}),\tau(L_{\alpha,i})+
\tau^\prime(L_{\alpha,i})-\tau(L_{\alpha,i})\Big]
\nonumber\\
&=&\alpha
\tau(L_{\alpha,i})+\Big[\tau(L_{0,0}),\tau^\prime(L_{\alpha,i})-\tau(L_{\alpha,i})\Big],
\end{eqnarray*} where in the second equality we use the fact that $M_{0,j}$ for all $j\in\Z$ lie in the center of $\W$.
Thus\begin{equation}\label{eq3.8}
\big[\tau(L_{0,0}),\tau^\prime(L_{\alpha,i})-\tau(L_{\alpha,i})\big]=\alpha\big(\tau^\prime(L_{\alpha,i})-\tau(L_{\alpha,i})\big).
\end{equation}

Note that  $[\tau^\prime(L_{\alpha,i}),M_{\beta,j}]=\beta
M_{\alpha+\beta,i+j}$, which is due to the fact $\tau^\prime$ preserves each basis element $M_{\beta,j}$. Thus,
$\tau^\prime(L_{\alpha,i})-L_{\alpha,i}\in\mathcal{M}(\Gamma)\oplus \mathcal{Y}(\Gamma,s)$. This and  \eqref{eq3.4} allow us to assume that
\begin{equation}\label{equaeq3.24}
\tau^\prime(L_{\alpha,i})-\tau(L_{\alpha,i})=
\mbox{$\sum\limits_{\beta\in \Gamma,j\in\Z}$}(e_{\alpha,i}^{\beta,j}M_{\beta,j}+d_{\alpha,i}^{\beta,j}Y_{\beta+s,j})\end{equation} for some $e_{\al,i}^{\beta,j}, d_{\al,i}^{\beta,j}\in\mathbb F$ with $e_{0,0}^{0,j}=-a^\prime_{0,j}$ and $d_{0,0}^{\beta,j}=0$.
Inserting  \eqref{eqnarrayyy3.20} and  \eqref{equaeq3.24} into \eqref{eq3.8}  we have
\begin{eqnarray*}\\
&&\alpha(\mbox{$\sum\limits_{\beta\in \Gamma,j\in\Z}$}e_{\alpha,i}^{\beta,j}M_{\beta,j}
+\mbox{$\sum\limits_{\beta\in \Gamma,j\in\Z}$}d_{\alpha,i}^{\beta,j}Y_{\beta+s,j})\\
&&=\Big[L_{0,0}+\mbox{$\sum\limits_{\beta\in\Gamma, j\in\Z}$}(a_{\beta,j}M_{\beta
,j}+b_{\beta,j}Y_{\beta+s,j}),\mbox{$\sum\limits_{\gamma\in \Gamma,j\in\Z}$}(e_{\al,i}^{\gamma,j}M_{\gamma,j}
+d_{\al,i}^{\gamma,j}Y_{\gamma+s,j})\Big]\nonumber\\
&&=\mbox{$\sum\limits_{\gamma\in\Gamma, j\in\Z}$}\Big(\gamma e_{\alpha,i}^{\gamma,j}M_{\gamma
,j}+(\gamma+s)d_{\alpha,i}^{\gamma,j}Y_{\gamma+s,j}\Big)
+\mbox{$\sum\limits_{\beta\neq\gamma\in \Gamma,j,k\in\Z}$}(\gamma-\beta)b_{\beta,k}d_{\alpha,i}^{\gamma,j}M_{\beta+\gamma+2s,j+k}.
\end{eqnarray*}
Comparing coefficients of $Y_{\beta+s,j}$  gives
$(\beta+s)d_{\al,i}^{\beta,j}=\alpha d_{\al,i}^{\beta,j}.$
Hence, $d_{\al,i}^{\beta,j}=0$ for $\al,\beta\in \Gamma$ and $i,j\in\Z$
since $\beta+s-\al\neq0$. While comparing coefficients of $M_{\beta,j},$ we have
$ e_{\al,i}^{\beta,j}=0\mbox{ for }
\al\neq\beta\in\Gamma, i,j\in\Z.$
Consequently, \eqref{equaeq3.24} can be simply written as
\begin{equation}\label{eq3.9}\tau^\prime(L_{\alpha,i})=\tau(L_{\alpha,i})+\mbox{$\sum\limits_{j\in\Z}$} e_{\alpha,i}^j M_{\alpha,j}\quad {\rm with}\ e_{\alpha,i}^j=e_{\alpha,i}^{\al,j}.\end{equation}
This is the first relation as promised in Claim \ref{c-l-a}.

 It follows from \eqref{eq3.7}, \eqref{eq-3.23} and \eqref{eq3.9} that
\begin{eqnarray*}\tau^\prime(Y_{\alpha+s,i})&=&(s-\frac{\alpha}{2})^{-1}\tau^\prime[L_{\alpha,0},Y_{s,i}]
\nonumber\\
&=&(s-\frac{\alpha}{2})^{-1}\Big[\tau(L_{\alpha,0})+\mbox{$\sum\limits_{j\in\Z}$}e_{\alpha,0}^j M_{\alpha,j},\tau(Y_{s,i})\Big]
=
\tau(Y_{\alpha+s,i}),\quad\forall\ 2s\neq\alpha\in \Gamma.
\end{eqnarray*}
Using this result and \eqref{eq3.9}, we have
\begin{eqnarray*}\tau^\prime(Y_{3s,i})&=&(-3s)^{-1}\tau^\prime[L_{4s,0},Y_{-s,i}]\nonumber\\
&=&(-3s)^{-1}\Big[\tau(L_{4s,0})+\mbox{$\sum\limits_{j\in\Z}$}e_{4s,0}^jM_{4s,j},\tau(Y_{-s,i})\Big]
=
\tau(Y_{3s,i}).
\end{eqnarray*}
So in either case we have proved the fact that
$
 \tau^\prime(Y_{\alpha+s,i})=\tau(Y_{\alpha+s,i})\mbox{ for }
 \alpha\in \Gamma, i\in\Z.
$ 
Whence we have established Claim \ref{c-l-a}.

In fact, Claim \ref{c-l-a} implies $\tau=\tau^\prime\psi_{\underline e}^{-1}\in {\rm Inn\,}\W\cdot \Psi$, where  $\psi_{\underline e}\in {\rm Aut\,} \W$ as defined in \eqref{psi_{e}}.
This together with \eqref{def-tau} gives rise to
\begin{equation*}
\sigma=\sigma_a\varphi\sigma_c^{\chi}\phi\tau_{b}\tau^\prime\psi_{\underline e}^{-1}\in
(A\times{{\rm Hom}_\Z(T,\Z)}\times(\chi(T)\times
\F^*)\times{{\rm Aut\,}\Z}\times\mathbb{F}^\times)\cdot {\rm Inn\,}\W\cdot\Psi.
\end{equation*}
Since ${\rm Inn\,}\W$ is a normal subgroup of ${\rm Aut\,}\W$, thus
${\rm Inn\,}\W\cdot\Psi=\Psi\cdot {\rm Inn\,}\W.$

In order to finish the whole proof, it remains to show the following is a simiproduct: $$(A\times{\rm Hom}_\Z(T,\Z)\times\chi(T)\times \F^*\times{{\rm
Aut\,}\Z}\times\mathbb{F}^\times)\ltimes\Psi, $$ which is equivalent to proving the following claim.
\begin{clai}\label{cla4}
For any $\sigma_a\in A, \varphi \in {\rm Hom}_\Z(T,\Z), \sigma_c^\chi\in \chi(T)\times \F^*, \phi\in {{\rm
Aut\,}\Z}$, $\tau_b\in \mathbb{F}^\times$, and $\psi_{\underline e}$ as defined in \eqref{psi_{e}}, we have $$(\sigma_a\varphi\sigma_c^\chi\phi\tau_b)^{-1}\psi_{\underline e}(\sigma_a\varphi\sigma_c^\chi\phi\tau_b)\in\Psi.$$
\end{clai}
In fact, we are going to show $$(\sigma_a\varphi\sigma_c^\chi\phi\tau_b)^{-1}\psi_{\underline e}(\sigma_a\varphi\sigma_c^\chi\phi\tau_b)=\psi_{\underline d},$$ where $\underline d=(d^k)_{k\in\Z}\in \mathcal{E}$ with $d^k$ defined by $d^k_{\al,i}=c^{-1}b^{i-k}e_{\frac{\alpha}{a},\phi(i)+\varphi(\alpha)}^{\phi(k)+\varphi(\alpha)}$ for any $\al\in\Gamma$ and $i\in\Z$.
It follows from the definitions of $\sigma_a,\varphi,\sigma_c^\chi,\phi,\tau_b$ and $\psi_{\underline{e}}$ that we obtain
\begin{eqnarray*}
&&(\sigma_a\varphi\sigma_c^\chi\phi\tau_b)^{-1}\psi_{\underline e}(\sigma_a\varphi\sigma_c^\chi\phi\tau_b)(L_{\alpha,i})
=(\sigma_a\varphi\sigma_c^\chi\phi\tau_b)^{-1}\psi_{\underline e}(\sigma_a\varphi\sigma_c^\chi\phi)
(b^iL_{\alpha,i})
\\
&&=(\sigma_a\varphi\sigma_c^\chi\phi\tau_b)^{-1}\psi_{\underline e}(\sigma_a\varphi\sigma_c^\chi)
(b^iL_{\alpha,\phi(i)})=(\sigma_a\varphi\sigma_c^\chi\phi\tau_b)^{-1}\psi_{\underline e}(\sigma_a\varphi)
(b^i\chi(\alpha)L_{\alpha,\phi(i)})
\\
&&=(\sigma_a\varphi\sigma_c^\chi\phi\tau_b)^{-1}\psi_{\underline e}(\sigma_a)
(b^i\chi(\alpha)L_{\alpha,\phi(i)+\varphi(\alpha)})
=(\sigma_a\varphi\sigma_c^\chi\phi\tau_b)^{-1}\psi_{\underline e}
(b^i\chi(\alpha)aL_{\frac{\alpha}{a},\phi(i)+\varphi(\alpha)})
\\
&&=\tau_b^{-1}\phi^{-1}(\sigma_c^\chi)^{-1}\varphi^{-1}(\sigma_a)^{-1}
(b^i\chi(\alpha)aL_{\frac{\alpha}{a},\phi(i)+\varphi(\alpha)}+b^i\chi(\alpha)a\mbox{$\sum\limits_{j\in\Z}$}e_{\frac{\alpha}{a},\phi(i)+\varphi(\alpha)}^{\phi(j)+\varphi(\alpha)}
M_{\frac{\alpha}{a},\phi(j)+\varphi(\alpha)})
\\
&&=\tau_b^{-1}\phi^{-1}(\sigma_c^\chi)^{-1}\varphi^{-1}
(b^i\chi(\alpha)L_{\alpha,\phi(i)+\varphi(\alpha)}+b^i\chi(\alpha)\mbox{$\sum\limits_{j\in\Z}$}e_{\frac{\alpha}{a},\phi(i)+\varphi(\alpha)}^{\phi(j)+\varphi(\alpha)}
M_{\alpha,\phi(j)+\varphi(\alpha)})
\\
&&=\tau_b^{-1}\phi^{-1}(\sigma_c^\chi)^{-1}
(b^i\chi(\alpha)L_{\alpha,\phi(i)}+b^i\chi(\alpha)\mbox{$\sum\limits_{j\in\Z}$}e_{\frac{\alpha}{a},\phi(i)+\varphi(\alpha)}^{\phi(j)+\varphi(\alpha)}
M_{\alpha,\phi(j)})
\\
&&=\tau_b^{-1}\phi^{-1}
(b^iL_{\alpha,\phi(i)}+\mbox{$\sum\limits_{j\in\Z}$}c^{-1}b^ie_{\frac{\alpha}{a},\phi(i)+\varphi(\alpha)}^{\phi(j)+\varphi(\alpha)}
M_{\alpha,\phi(j)})
=\tau_b^{-1}
(b^iL_{\alpha,i}+\mbox{$\sum\limits_{j\in\Z}$}c^{-1}b^ie_{\frac{\alpha}{a},\phi(i)+\varphi(\alpha)}^{\phi(j)+\varphi(\alpha)}
M_{\alpha,j})\\
&&=
L_{\alpha,i}+\mbox{$\sum\limits_{j\in\Z}$}c^{-1}b^{i-j}e_{\frac{\alpha}{a},\phi(i)+\varphi(\alpha)}^{\phi(j)+\varphi(\alpha)}
M_{\alpha,j}=\psi_{\underline d}(L_{\al,i}).
\end{eqnarray*}
While for $M_{\al, i}$ and $Y_{\al+s,i}$, we have
\begin{eqnarray*}&&
(\sigma_a\varphi\sigma_c^\chi\phi\tau_b)^{-1}\psi_{\underline e}(\sigma_a\varphi\sigma_c^\chi\phi\tau_b)(M_{\al,i})=M_{\al,i}=\psi_{\underline d}(M_{\al,i}),
\\
&& (\sigma_a\varphi\sigma_c^\chi\phi\tau_b)^{-1}\psi_{\underline e}(\sigma_a\varphi\sigma_c^\chi\phi\tau_b)(Y_{\al+s,i})=Y_{\al+s,i}=\psi_{\underline d}(Y_{\al+s,i}).
\end{eqnarray*}Thus, $(\sigma_a\varphi\sigma_c^\chi\phi\tau_b)^{-1}\psi_{\underline e}(\sigma_a\varphi\sigma_c^\chi\phi\tau_b)=\psi_{\underline d},$ as desired.
\end{proof}

\section{Second cohomology group of $\W$}

 We start with some relevant notions. A bilinear form
$\W\times \W \rightarrow \mathbb{F}$ is called a {\it 2-cocycle}
on $\W$ if the following conditions are satisfied:
\begin{eqnarray*}
&&\psi(x,y)=-\psi(y,x),\\
&&\psi(x,[y,z])+\psi(y,[z,x])+\psi(z,[x,y])=0, \quad \forall\
x,y,z \in \W.
\end{eqnarray*}
 Denote by $C^{2}(\W,\ \mathbb{F})$ the vector space of
2-cocycles on
 $\W$. For any linear function $f: \W \rightarrow
 \mathbb{F}$, one can define a 2-cocycle $\psi_{f}$ as follows:
\begin{eqnarray}\label{eq4.1}
&&\psi_{f}(x,y)=f([x,y]), \quad\forall\ x,y \in \W.
\end{eqnarray}
Such a 2-cocycle is called a {\it 2-coboundary} on $\W$. Denote by
$B^{2}(\W,\ \mathbb{F})$ the vector space of 2-coboundaries on
 $\W$. The quotient space
 \begin{eqnarray*}
&&H^{2}(\W,\ \mathbb{F})=C^{2}(\W,\ \mathbb{F})/B^{2}(\W,\
\mathbb{F})
\end{eqnarray*}
is called  the {\it second cohomology group} of $\W$. The 2-cocyles
are closely related to  central extensions of
Lie algebras. To be more precise,  there exists a one-to-one correspondence between the
set of equivalence classes of one-dimensional central extensions of
$\W$ by $\mathbb{F}$ and the second cohomology group of $\W$.

Now let $\phi$ be a 2-cocycle of $\W$. Define an
$\mathbb{F}$-linear map $f:\W\rightarrow \mathbb{F}$ as follows:
\begin{eqnarray}\label{definedn4.1}&&
f(L_{\alpha,i})=\left\{\begin{array}{llll}\frac{1}{\alpha}\phi(L_{0,0},L_{\alpha,i})&\mbox{if \ }\alpha\neq 0,\\[4pt]
-\frac{1}{4s}\phi(L_{2s,0},L_{-2s,i})&\mbox{if \ }\alpha =0,
\end{array}\right.\\
\label{definedn4.2}
&&f(M_{\alpha,i})=\left\{\begin{array}{llll}\frac{1}{\alpha}\phi(L_{0,0},M_{\alpha,i})&\mbox{if \ }\alpha\neq 0,\\[4pt]
-\frac{1}{2s}\phi(L_{2s,0},M_{-2s,i})&\mbox{if \ }\alpha =0,
\end{array}\right.\\
\label{definedn4.3}
&&f(Y_{\alpha+s,i})=\frac{1}{\alpha+s}\phi(L_{0,0},Y_{\alpha+s,i}),
 \quad\quad \forall\ \,\alpha\in \Gamma.
\end{eqnarray}Set $\varphi=\phi-\psi_f$, where $\psi_f$ is defined in
\eqref{eq4.1}. Using the Jacobi identity on the triples
$(L_{\alpha,i},L_{\beta,j},M_{\gamma,k})$ and
$(L_{\alpha,i},Y_{\beta+s,j},Y_{\gamma+s,k})$, we respectively get
\begin{eqnarray}\label{eq4.5}
\gamma\varphi(L_{\alpha,i},M_{\beta+\gamma,j+k})
&=&\varphi(L_{\alpha,i}, [L_{\beta,j},M_{\gamma,k}])\nonumber\\
&=&\varphi\big([L_{\alpha,i},L_{\beta,j}],M_{\gamma,k}\big)+\varphi\big(L_{\beta,j}, [L_{\alpha,i},M_{\gamma,k}]\big)\nonumber\\
&=&(\beta-\alpha)\varphi(L_{\alpha+\beta,i+j},M_{\gamma,k})+\gamma\varphi(L_{\beta,j},M_{\alpha+\gamma,i+k})
\end{eqnarray}
and
\begin{eqnarray}\label{eq4.6}
&&(\gamma-\beta)\varphi(L_{\alpha,i},M_{\beta+\gamma+2s,j+k})
=
\varphi\big(L_{\alpha,i}, [Y_{\beta+s,j},Y_{\gamma+s,k}]\big)\nonumber\\
&&=\varphi\big([L_{\alpha,i},Y_{\beta+s,j}],Y_{\gamma+s,k}\big)+\varphi\big(Y_{\beta+s,j}, [L_{\alpha,i},Y_{\gamma+s,k}]\big)\nonumber\\
&&=(\beta+s-\frac{\alpha}{2})\varphi(Y_{\alpha+\beta+s,i+j},Y_{\gamma+s,k})
+(\gamma+s-\frac{\alpha}{2})\varphi(Y_{\beta+s,j},Y_{\alpha+\gamma+s,i+k})
\end{eqnarray}
for all $\alpha,\beta,\gamma\in \Gamma$ and $i,j,k\in\Z$. Setting $\al=i=0$ and $\gamma=-\beta\neq0$ in \eqref{eq4.5}, one has $\varphi(L_{0,0}, M_{0,j})=0$ for any $j\in\Z$.
This together with  \eqref{definedn4.2} and \eqref{definedn4.3} gives
\begin{equation}\label{definedn13}
\varphi(L_{0,0},M_{\alpha,i})=\varphi(L_{2s,0},M_{-2s,i})=\varphi(L_{0,0},Y_{\al+s,i})=0
\mbox{
for all $\alpha\in \Gamma$ and $i\in\Z$.
}\end{equation}
For simplicity, denote
\begin{eqnarray*}
&&A_{\alpha,\beta,i,j}=\varphi(L_{\alpha,i},M_{\beta,j}),\quad\ \ \ \ \ \ \ \ \ \
A_{\alpha,i,j}=\varphi(L_{\alpha,i},M_{-\alpha,j}),\\
&&B_{\alpha+s,\beta+s,i,j}=\varphi(Y_{\alpha+s,i},Y_{\beta+s,j}),\quad \ B_{\alpha+s,i,j}=\varphi(Y_{\alpha+s,i},Y_{-\alpha-s,j})
\end{eqnarray*}
for all $\alpha,\beta\in \Gamma$ and $i,j\in\Z$.

We are next   devoted to deriving that $A_{\al,\beta,i,j}=B_{\al,\beta,i,j}=0$ for any $\al,\beta\in\Gamma$ and $i,j\in\Z$. To do this,   it first follows
by setting $\gamma=-2\alpha, \beta=\alpha$ in \eqref{eq4.5} that
$\alpha A_{\alpha,i,j+k}=\alpha A_{\alpha,j,i+k}.$
Hence,
\begin{equation}\label{rel-ab-A}
 A_{\alpha,i,j+k}=A_{\alpha,j,i+k}, \quad \forall\ \alpha\neq 0,i,j,k \in
 \mathbb{Z}. \end{equation}
Let $\alpha=-\gamma\neq 0,\beta=0$  in \eqref{eq4.5} and using \eqref{rel-ab-A}, one has
\begin{equation}\label{rel-ab-4.10}
A_{0,j,i+k}=0,\quad \forall\ i,j,k \in
 \mathbb{Z}.
\end{equation}
Combining \eqref{rel-ab-A} and \eqref{rel-ab-4.10}, we have
\begin{equation}\label{rel-ab-4.11}
A_{\alpha,i,j+k}=A_{\alpha,j,i+k}, \quad \forall\ \alpha\in\Gamma,i,j,k \in\Z.\end{equation}
This shows for any given $\al\in\Gamma$, the value $A_{\al,i,j}$ only depends on  the sum $i+j$. So we shortly write $A_{\al,i,j}$ as $A_{\al,i+j}$ in what follows.

\begin{lemm} \label{lem4.2}
$A_{\alpha,\beta,i,j}=\frac{\alpha^2-2\alpha s}{8s^2}\delta_{\alpha+\beta,0}A_{4s,i+j}\
for\ all\ \alpha,\beta\in \Gamma$ and $i,j\in\Z$.
\end{lemm}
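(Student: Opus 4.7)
The plan is to prove the two assertions packaged in the right-hand side: that $A_{\alpha,\beta,i,j}$ vanishes whenever $\alpha+\beta\neq 0$ (the $\delta_{\alpha+\beta,0}$ factor), and that in the remaining case $\beta=-\alpha$ the function $A_{\alpha,l}$ with $l=i+j$ takes the prescribed quadratic-in-$\alpha$ form.

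First I will establish the vanishing by two applications of \eqref{eq4.5}. Setting $\gamma=0$ gives $(\beta-\alpha)A_{\alpha+\beta,0,i+j,k}=0$, and choosing $\alpha\neq\beta$ while keeping $\alpha+\beta$ fixed yields $A_{\mu,0,l,k}=0$ for every $\mu\in\Gamma$ and $l,k\in\Z$. Setting instead $\beta=j=0$ in \eqref{eq4.5} gives $(\alpha+\gamma)A_{\alpha,\gamma,i,k}=\gamma A_{0,\alpha+\gamma,0,i+k}$, whose right-hand side vanishes by \eqref{definedn13} whenever $\alpha+\gamma\neq 0$. Combined with the previously established second-subscript-zero vanishing, and a symmetric substitution to dispatch $A_{0,\gamma,i,j}$ for $\gamma\neq 0$, this proves $A_{\alpha,\beta,i,j}=0$ as soon as $\alpha+\beta\neq 0$.

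Next, for $\beta=-\alpha$, relation \eqref{rel-ab-4.11} shows $A_{\alpha,i,j}$ depends only on $l:=i+j$, and I write $A_{\alpha,l}$ for this common value. Substituting $\gamma=-\alpha-\beta$ into \eqref{eq4.5} and invoking the translation invariance yields the functional equation
\[
(\beta-\alpha)\,A_{\alpha+\beta,\,l} \;=\; (\alpha+\beta)\bigl(A_{\beta,\,l}-A_{\alpha,\,l}\bigr),\qquad \alpha,\beta\in\Gamma,\ \alpha+\beta\neq 0.
\]
Substituting $A_{\alpha,l}=\alpha\,b(\alpha)$ for $\alpha\neq 0$ reduces this to $(\beta-\alpha)b(\alpha+\beta)=\beta b(\beta)-\alpha b(\alpha)$, which is exactly the defining relation of $\mathfrak{g}(\Gamma)$ from Section~2; its solutions are the linear maps $b(\alpha)=c_1(l)+c_2(l)\,\alpha$, giving the parametric form $A_{\alpha,l}=c_1(l)\,\alpha+c_2(l)\,\alpha^2$.

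Finally, the normalization of $f$ pins down the coefficients. A direct calculation using \eqref{definedn4.2} gives $\psi_f(L_{2s,0},M_{-2s,i})=\phi(L_{2s,0},M_{-2s,i})$, so $\varphi(L_{2s,0},M_{-2s,i})=0$ for all $i\in\Z$; by the translation invariance this forces $A_{2s,l}=0$ for every $l$. Substituting $\alpha=2s$ into the parametric form gives $2s\,c_1(l)+4s^2\,c_2(l)=0$, hence $c_1(l)=-2s\,c_2(l)$ and $A_{\alpha,l}=c_2(l)(\alpha^2-2s\alpha)$. Evaluating at $\alpha=4s$ determines $c_2(l)=A_{4s,l}/(8s^2)$, from which the stated formula is immediate. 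The main delicate step is the classification of solutions of the functional equation in the preceding paragraph; everything else is a direct substitution or an application of \eqref{definedn13}.
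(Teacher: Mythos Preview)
Your vanishing argument for $\alpha+\beta\neq 0$ is fine (the paper does it in one stroke by setting $\alpha=i=0$ in \eqref{eq4.5}, but your two-step version also works), and your final normalization using $A_{2s,l}=0$ is correct. The gap is exactly where you flag it.

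You reduce the problem to the functional equation $(\beta-\alpha)b(\alpha+\beta)=\beta b(\beta)-\alpha b(\alpha)$ and then assert that ``its solutions are the linear maps $b(\alpha)=c_1(l)+c_2(l)\alpha$'', citing the definition of $\mathfrak{g}(\Gamma)$. But Section~2 never classifies $\mathfrak{g}(\Gamma)$; it only \emph{defines} it as the set of solutions of this relation and uses it as a parameter space for outer derivations. Nothing in the paper says $\mathfrak{g}(\Gamma)$ consists of affine functions, so your proof currently rests on an unproven lemma. The claim is true---after normalizing $b(0)=0$, combining the relation with its shift $\alpha\mapsto -\alpha$ yields $\beta b(\alpha)=\alpha b(\beta)$ for $\alpha,\beta,\alpha+\beta$ all nonzero---but you need to supply this argument.

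The paper's proof avoids this issue entirely by a direct computation: after the vanishing step it specializes \eqref{eq4.5} to $\beta=2ps$, $\gamma=-(\alpha+2ps)$ with $p=1,2$, uses $A_{2s,i}=0$ to obtain three concrete relations linking $A_{\alpha,i}$, $A_{\alpha+2s,i}$, $A_{\alpha+4s,i}$ and $A_{4s,i}$, and solves them to get $A_{\alpha,i}=\frac{\alpha^2-2\alpha s}{8s^2}A_{4s,i}$. This is less conceptual than your route but is self-contained, exploiting the specific element $2s\in\Gamma$ rather than the general structure of the cocycle equation. If you fill in the classification of $\mathfrak{g}(\Gamma)$, your argument becomes a genuinely different and cleaner proof.
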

\begin{proof} It follows from  \eqref{definedn13} and by setting $\al=i=0$ in \eqref{eq4.5} that
$(\beta+\gamma)\varphi(L_{\beta,j},M_{\gamma,k})=0$, which gives
$A_{\beta,\gamma,j,k}=\varphi(L_{\beta,j},M_{\gamma,k})=0$ whenever $\beta+\gamma\neq 0$. So next we only need to consider the case $\al+\beta+\gamma=0$ in \eqref{eq4.5}.

Putting
$\beta=2ps,\gamma=-(\al+2ps)$ in \eqref{eq4.5} and using
\eqref{rel-ab-4.11} gives \begin{equation}\label{eq--4..9}
(\al+2ps)A_{\al,i}=(\al-2ps)A_{\al+2ps,i}+(\al+2ps)A_{2ps,i}, \quad \forall\ \al\in\Gamma, i,p\in\Z.
\end{equation}Note that $A_{2s,i}=A_{2s,0,i}=0$ for all $i\in\Z$ by \eqref{definedn13}. So in the case $p=1$ and $\al=2qs\ (q\in\Z),$ \eqref{eq--4..9} simply becomes $(q+1)A_{2qs,i}=(q-1)A_{2(q+1)s,i}$, in particular from which we derive  $A_{0,i}=-A_{2s,i}=0$ for all $i\in\Z.$ Thus, setting $p=1$ in \eqref{eq--4..9} gives \begin{equation}\label{eq--4..10}
(\al+2s)A_{\al,i}=(\al-2s)A_{\al+2s,i}, \quad \forall\ \al\in\Gamma, i\in\Z.
\end{equation}
Replacing $\al$ by $\al+2s$ in above equation we get  \begin{equation}\label{eq--4..11}
(\al+4s)A_{\al+2s,i}=\al A_{\al+4s,i}, \quad \forall\ \al\in\Gamma, i\in\Z.
\end{equation} While setting $p=2$ in \eqref{eq--4..9} we get another relation \begin{equation}\label{eq--4..12}
(\al+4s)A_{\al,i}=(\al-4s)A_{\al+4s,i}+(\al+4s)A_{4s,i}, \quad \forall\ \al\in\Gamma, i\in\Z.
\end{equation}Now it follows from  \eqref{eq--4..10}$-$\eqref{eq--4..12} and taking \eqref{rel-ab-4.11} into account that
$
A_{\alpha,i}=\frac{\alpha^2-2\alpha s}{8s^2}A_{4s,i}$ for $
\al\in\Gamma,$ $i\in\Z.
$ 
This completes the proof.   \end{proof}

\begin{lemm} \label{lem4.3}
$A_{\alpha,\beta,i,j}=B_{\alpha+s,\beta+s,i,j}=0, \quad\forall\
\alpha,\beta\in \Gamma,i,j\in\Z$.
\end{lemm}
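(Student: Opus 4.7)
The plan is to combine the 2-cocycle identity \eqref{eq4.6} with two well-chosen specializations of parameters and Lemma~\ref{lem4.2} to force both $A_{4s,n}$ and all the $B$-values to vanish.

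First, I would reduce the problem to a ``diagonal'' one: applying the cocycle identity to the triple $(L_{0,0},Y_{\alpha+s,i},Y_{\beta+s,j})$ and invoking $\varphi(L_{0,0},M_{\gamma,k})=0$ from \eqref{definedn13} yields $(\alpha+\beta+2s)B_{\alpha+s,\beta+s,i,j}=0$, so $B_{\alpha+s,\beta+s,i,j}=0$ whenever $\alpha+\beta+2s\neq 0$. Together with Lemma~\ref{lem4.2}, it therefore suffices to show $A_{4s,n}=0$ and $B_{\alpha+s,n}=0$ for all $\alpha\in\Gamma$ and $n\in\Z$, using the shorthand $B_{\alpha+s,n}=B_{\alpha+s,-\alpha-s,i,j}$ with $n=i+j$.

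Next, I would extract two families of relations from \eqref{eq4.6}. Specializing at $\beta=0,\gamma=-\alpha-2s$ gives $-(\alpha+2s)A_{\alpha,n}=(s-\tfrac{\alpha}{2})B_{\alpha+s,n}-(\tfrac{3\alpha}{2}+s)B_{s,n}$; taking $\alpha=2s$ kills the left-hand side (by Lemma~\ref{lem4.2}) and the coefficient of $B_{\alpha+s,n}$ simultaneously, so $B_{s,n}=0$, and Lemma~\ref{lem4.2} then reduces the identity to $B_{\alpha+s,n}=\tfrac{\alpha(\alpha+2s)}{4s^2}A_{4s,n}$ for $\alpha\neq 2s$. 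Specializing instead at $\beta=-2s,\gamma=-\alpha$ and using the antisymmetry $B_{-s,n}=-B_{s,n}=0$ gives $(2s-\alpha)A_{\alpha,n}=-\tfrac{\alpha+2s}{2}B_{\alpha-s,n}$; shifting $\alpha\mapsto\alpha+2s$ and applying Lemma~\ref{lem4.2} produces $B_{\alpha+s,n}=\tfrac{\alpha^2(\alpha+2s)}{4s^2(\alpha+4s)}A_{4s,n}$ for $\alpha\neq -4s$.

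Finally, comparing the two expressions for $B_{\alpha+s,n}$ at $\alpha=4s\in\Gamma$, which avoids the excluded values $\{0,\pm 2s,-4s\}$ since $s\neq 0$ in characteristic zero, forces $A_{4s,n}=0$. The first formula then yields $B_{\alpha+s,n}=0$ for all $\alpha\neq 2s$, while the second formula at $\alpha=2s$ handles the remaining case $B_{3s,n}=0$. Combined with Lemma~\ref{lem4.2} and the opening reduction, this gives $A_{\alpha,\beta,i,j}=B_{\alpha+s,\beta+s,i,j}=0$ as claimed. The main obstacle is selecting the two specializations of \eqref{eq4.6} so that they deliver compatible but \emph{inequivalent} linear constraints on the unknowns $A_{4s,n}$ and $\{B_{\alpha+s,n}\}$; once the choices $\beta=0$ and $\beta=-2s$ are in hand, the subsequent bookkeeping through Lemma~\ref{lem4.2} is routine.
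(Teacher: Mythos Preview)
Your argument is correct and takes a genuinely different route from the paper's. Both proofs begin with the same off-diagonal reduction (the triple $(L_{0,0},Y_{\alpha+s,i},Y_{\beta+s,j})$), but the core mechanism for forcing $A_{4s,n}=0$ differs. The paper uses the single specialization $\alpha=-\beta,\ \gamma=-2s$ in \eqref{eq4.6} to obtain \eqref{eq4.14}, from which it first gets $B_{s,i,j}=-A_{4s,i+j}$ (not zero at this stage) and then the polynomial expression \eqref{eq4.18}; it finishes by invoking the antisymmetry $B_{5s,i,j}=-B_{-5s,j,i}$, which yields $-11A_{4s,n}=A_{4s,n}$. You instead run \emph{two} specializations of \eqref{eq4.6} ($\beta=0$ and $\beta=-2s$), obtain $B_{s,n}=0$ immediately from $A_{2s,n}=0$, derive two distinct closed formulas $B_{\alpha+s,n}=\tfrac{\alpha(\alpha+2s)}{4s^2}A_{4s,n}$ and $B_{\alpha+s,n}=\tfrac{\alpha^2(\alpha+2s)}{4s^2(\alpha+4s)}A_{4s,n}$, and compare them at $\alpha=4s$ to get $6A_{4s,n}=3A_{4s,n}$. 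Your route avoids both the antisymmetry trick and the paper's separate ad~hoc computation for $B_{3s}$ (which you recover from the second formula at $\alpha=2s$), at the cost of tracking one extra specialization.

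One expository point: you introduce the shorthand $B_{\alpha+s,n}$ before it is justified that $B_{\alpha+s,i,j}$ depends only on $i+j$. This is harmless, since each of your two relations \emph{separately} shows that $B_{\alpha+s,i+j,k}$ equals a function of $i+j+k$ (for $\alpha\neq 2s$ and $\alpha\neq -4s$, respectively), but it would read more cleanly to note this explicitly after eliminating $B_{s}$.
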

\begin{proof}It follows from  Lemma \ref{lem4.2} and by setting  $\alpha=k=0$ in \eqref{eq4.6} that $$
(\beta+\gamma+2s)B_{\beta+s,\gamma+s,i,j}=0.$$ Thus $
B_{\beta+s,\gamma+s,i,j}=0$ 
{\rm if} $\beta+\gamma+2s\neq0.$ 
While
setting $\gamma=-2s$ and $\alpha=-\beta$ in \eqref{eq4.6}, we obtain
\begin{eqnarray}\label{eq4.14}
(2s+\beta)A_{-\beta,i+j+k}=-(s+\frac{3\beta}{2})B_{s,i+j,k}+(s-\frac{\beta}{2})B_{\beta+s,j,i+k},\quad \forall\ \beta\in\Gamma, i,j,k\in\Z.
\end{eqnarray}
In particular, setting $\beta=2s$ in above equation gives \begin{equation}\label{eq-B_{s,j,j}} B_{s,i,j}=-A_{-2s,i+j}=-A_{4s,i+j},\end{equation} where the latter equality follows from Lemma \ref{lem4.2}.
Now it follows from Lemma \ref{lem4.2}, \eqref{eq4.14} and \eqref{eq-B_{s,j,j}} that
\begin{eqnarray}\label{eq4.18}
B_{\beta+s,j,i+k}=-\frac{4s^2+6\beta s+\beta^2}{4s^2}A_{4s,i+j+k}
\quad {\rm for}\   \beta\neq 2s.\end{eqnarray}
For the case $\beta=2s$, setting $\alpha=4s, \beta=-2s$ and $\gamma=-4s$
in \eqref{eq4.6} and by \eqref{eq4.18}, one has \begin{equation}\label{B_{3s}}B_{3s,\,i+j,\,k}=-A_{4s,i+j+k},\quad \forall\ i,j,k\in\Z.\end{equation}
In particular,  setting  $\beta=4s$ and $\beta=-6s$ in \eqref{eq4.18},  we  respectively get
\begin{eqnarray}\label{eqnaaaaa4.20}
B_{5s,j,i+k}=-11A_{4s,i+j+k} \quad{\rm and}\quad B_{-5s,j,i+k}=-A_{4s,i+j+k},\quad \forall\ i,j,k\in\Z.
\end{eqnarray}
Note that $\varphi(Y_{5s,i},Y_{-5s,j})=-\varphi(Y_{-5s,j},Y_{5s,i})$. Hence, $B_{5s,\,i,\,j}=-B_{-5s,\,j,\,i}$
for any $i,j\in\Z$, which together with \eqref{eqnaaaaa4.20}  gives rise to $A_{4s,l}=0$ for
 all $l\in\Z$. This in turn forces for any $\al, \beta\in\Gamma$ and $i,j\in\Z$, $A_{\al,\beta,i,j}=0$
 by Lemma \ref{lem4.2} and $B_{\beta+s,i,j}=0$  by \eqref{eq4.18} and \eqref{B_{3s}}, completing the proof.
\end{proof}

\begin{lemm}\label{lem4.4}
$\varphi(M_{\alpha,i},M_{\beta,j})=\varphi(L_{\alpha,i},Y_{\beta+s,j})=\varphi(M_{\alpha,i},Y_{\beta+s,j})=0,\quad \forall\ \alpha,\beta\in
\Gamma,i,j\in\Z$.
\end{lemm}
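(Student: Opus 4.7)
The plan is to derive each of the three vanishings as a direct consequence of the 2-cocycle Jacobi identity applied to a carefully chosen triple, using the identities already established in \eqref{definedn13} and Lemma \ref{lem4.3}, together with the crucial arithmetic fact that $s\notin\Gamma$ forces $\beta+\gamma+s\neq 0$ for all $\beta,\gamma\in\Gamma$.

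First, to prove $\varphi(L_{\alpha,i},Y_{\beta+s,j})=0$, I would apply the Jacobi identity to the triple $(L_{0,0},L_{\beta,j},Y_{\gamma+s,k})$. Using the brackets in \eqref{rel1} and \eqref{rel3}, the term $\varphi(L_{0,0},Y_{\beta+\gamma+s,j+k})$ is killed by \eqref{definedn13}, while the remaining two terms collapse into a single scalar multiple $-(\beta+\gamma+s)\varphi(L_{\beta,j},Y_{\gamma+s,k})$ of the unknown. Since $\beta+\gamma\in\Gamma$ but $s\notin\Gamma$, the coefficient $\beta+\gamma+s$ is nonzero, giving the claim. The second identity, $\varphi(M_{\alpha,i},Y_{\beta+s,j})=0$, is handled in exactly the same way by Jacobi on $(L_{0,0},M_{\beta,j},Y_{\gamma+s,k})$: because $[M,Y]=0$ one term vanishes automatically, and the remaining identity reduces to $-(\beta+\gamma+s)\varphi(M_{\beta,j},Y_{\gamma+s,k})=0$.

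For the third identity $\varphi(M_{\alpha,i},M_{\beta,j})=0$, I would apply Jacobi to $(M_{\alpha,i},Y_{\beta+s,j},Y_{\gamma+s,k})$. Two of the three brackets vanish since $[M,Y]=0$, and only the first term survives, yielding
\begin{equation*}
(\gamma-\beta)\,\varphi(M_{\alpha,i},M_{\beta+\gamma+2s,j+k})=0.
\end{equation*}
To reach an arbitrary target $M_{\delta,\ell}$ I need to represent $\delta-2s$ as a sum $\beta+\gamma$ with $\beta\neq\gamma$ in $\Gamma$; equivalently, I need $\beta\in\Gamma$ with $2\beta\neq\delta-2s$. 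This is always possible since $\Gamma$ is an infinite torsion-free subgroup of the characteristic-zero field $\F$ (it contains $2s\neq 0$).

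The proof is essentially Jacobi-identity bookkeeping powered by the preceding lemmas, and I do not anticipate a genuine obstacle. The only subtlety worth flagging is the last solvability remark, which might appear to require hypotheses on $\Gamma$ but in fact follows trivially from $\Gamma$ being a nonzero additive subgroup of $\F$.
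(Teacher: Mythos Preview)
Your arguments for $\varphi(L_{\alpha,i},Y_{\beta+s,j})=0$ and $\varphi(M_{\alpha,i},Y_{\beta+s,j})=0$ are exactly the paper's: Jacobi on $(L_{0,0},L_{\beta,j},Y_{\gamma+s,k})$ and $(L_{0,0},M_{\beta,j},Y_{\gamma+s,k})$, combined with \eqref{definedn13} and the fact $\beta+\gamma+s\notin\Gamma\cup\{0\}$.

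For $\varphi(M_{\alpha,i},M_{\beta,j})=0$ your route is correct but genuinely different from the paper's. The paper proceeds in three steps: first Jacobi on $(L_{0,0},M_{\alpha,i},M_{\beta,j})$ gives the vanishing off the ``diagonal'' $\alpha+\beta=0$; then Jacobi on $(L_{-\alpha-2s,0},M_{\alpha,i},M_{2s,j})$ reduces every diagonal value to the single quantity $\varphi(M_{-2s,i},M_{2s,j})$; finally Jacobi on the specific triple $(Y_{-3s,0},Y_{s,i},M_{2s,j})$ kills that quantity. Your approach instead applies Jacobi once to the generic triple $(M_{\alpha,i},Y_{\beta+s,j},Y_{\gamma+s,k})$ and exploits the surjectivity of $(\beta,\gamma)\mapsto\beta+\gamma+2s$ with $\beta\neq\gamma$, which holds because $\Gamma\supseteq\Z\cdot 2s$ is infinite in characteristic~$0$. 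Your argument is shorter and avoids the intermediate reduction; the paper's version has the minor advantage of isolating the degree obstruction $(\alpha+\beta\neq0)$ explicitly, but both are complete. Note that neither argument actually needs Lemma~\ref{lem4.3}, so your preamble slightly overstates the dependencies.
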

\begin{proof}
It follows from  applying the Jacobi identity to the triple
$(L_{0,0},M_{\alpha,i},M_{\beta,j})$ and using  \eqref{rel5} that
$\varphi([L_{0,0},M_{\alpha,i}],M_{\beta,j})+\varphi(M_{\alpha,i},[L_{0,0},M_{\beta,j}])=0,$
 which gives
 \begin{eqnarray}\label{eq4.13}
\varphi(M_{\alpha,i},M_{\beta,j})=0 &\mbox{if \ } \alpha+\beta\neq0.
\end{eqnarray}
 On the
other hand, from the Jacobi identity on the triple
$(L_{-\alpha-2s,0},M_{\alpha,i},M_{2s,j})$ we get
$\varphi\big([L_{-\alpha-2s,0},M_{\alpha,i}],M_{2s,j}\big)+\varphi\big(M_{\alpha,i},[L_{-\alpha-2s,0},M_{2s,j}]\big)=0,$ i.e.,
\begin{eqnarray*}\label{eq2s4.21}
2s\varphi(M_{\alpha,i},M_{-\alpha,j})=-\alpha\varphi(M_{-2s,i},M_{2s,j}), \quad\forall\ \al\in\Gamma, i,j\in\Z.
\end{eqnarray*}
While it follows from  the Jacobi identity on the triple $(Y_{-3s,0}, Y_{s,i},
M_{2s,j})$ that \begin{eqnarray*}
4s\varphi(M_{-2s,i},M_{2s,j})=0, \quad\forall\ i,j\in\Z.
\end{eqnarray*} Hence, $
\varphi(M_{\alpha,i},M_{-\alpha,j})=0$ for $
\al\in\Gamma, i,j\in\Z.
$ 
Combining  this with \eqref{eq4.13} 
gives $$\varphi(M_{\alpha,i},M_{\beta,j})=0,\quad\forall\ \alpha,\beta\in \Gamma,i,j\in\Z,$$ as desired.

Applying the Jacobi identity on the triple
$(L_{0,0},L_{\alpha,i},Y_{\beta+s,j}),$ we get
\begin{eqnarray*}
(\beta+s-\frac\al2)\varphi(L_{0,0}, Y_{\al+\beta+s,i+j})&=&\varphi\big(L_{0,0},[L_{\alpha,i},Y_{\beta+s,j}]\big)\\
&=&\varphi\big(L_{\alpha,i},[L_{0,0},Y_{\beta+s,j}]\big)+\varphi\big([L_{0,0},L_{\alpha,i}],Y_{\beta+s,j}\big)\\
&=&(\beta+s)\varphi(L_{\al,i}, Y_{\beta+s,j})+\al\varphi(L_{\al,i}, Y_{\beta+s,j})\\
&=&(\al+\beta+s)\varphi(L_{\al,i}, Y_{\beta+s,j}).
\end{eqnarray*}
This together with
$\varphi(L_{0,0},Y_{\alpha+s,i})=0 $ for all $\alpha\in \Gamma$ and $i\in\Z $ (cf. \eqref{definedn13}) forces
  $$\varphi(L_{\alpha,i},Y_{\beta+s,j})=0,\quad \forall\ \al, \beta\in\Gamma, i,j\in\Z,$$  since $0\neq \alpha+\beta+s$ for all $\al,\beta\in\Gamma$. Similarly,  from the Jacobi identity on the triple
$(L_{0,0},M_{\alpha,i},Y_{\beta+s,j})$, we have
   $(\alpha+\beta+s)\varphi(M_{\alpha,i},Y_{\beta+s,j})=0$ and therefore
 $\varphi(M_{\alpha,i},Y_{\beta+s,j})=0$ for all $\al,\beta\in\Gamma$ and $i,j\in\Z$.
The proof of this lemma is complete.\end{proof}

It follows from   \cite[Theorem 4.3]{WWY} and  Lemmas \ref{lem4.2}--\ref{lem4.4} that we get another main result of the present paper.
\begin{theo} \label{th4.6}
The second cohomology group
\begin{eqnarray*}
H^{2}(\W,\ \mathbb{F})=\mbox{$\prod\limits_{k \in \mathbb{Z}}$}
\mathbb{F}\bar\phi_{k}
\end{eqnarray*}
is the direct product of all $\mathbb{F}\bar\phi_{k},\ k\in
\mathbb{Z}$, where $\bar\phi_{k}$ is the cohomology class of
$\phi_{k}$ defined by
\begin{eqnarray*}
\phi_{k}(L_{\alpha,i},L_{\beta,j})=\delta_{\alpha+\beta,0}\delta_{i+j,k}\frac{\alpha^{3}-\alpha}{12},
\quad \forall\ \alpha, \beta\in \Gamma,i,j\in \mathbb{Z}
\end{eqnarray*}
$($all other components vanish$)$.
\end{theo}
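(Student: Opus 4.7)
The plan is to leverage Lemmas \ref{lem4.2}--\ref{lem4.4} to reduce the second cohomology computation of $\W$ to that of its subalgebra $\mathcal{L}(\Gamma)$, and then invoke \cite[Theorem 4.3]{WWY} which classifies $H^2(\mathcal{L}(\Gamma), \mathbb{F})$ as a direct product indexed by $\mathbb{Z}$.

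First, the combined effect of Lemmas \ref{lem4.2}--\ref{lem4.4} is that, after replacing an arbitrary 2-cocycle $\phi$ by $\varphi = \phi - \psi_f$ with $f$ as in \eqref{definedn4.1}--\eqref{definedn4.3}, the modified cocycle $\varphi$ vanishes on every pair of basis vectors in which at least one entry lies in $\mathcal{M}(\Gamma)\oplus\mathcal{Y}(\Gamma,s)$. Hence $\varphi$ is determined by its restriction to $\mathcal{L}(\Gamma)\times\mathcal{L}(\Gamma)$, and all Jacobi identities on triples involving an $M$ or $Y$ entry hold automatically, since each resulting summand contains $\varphi$ applied to a pair with such an entry.

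Second, because the Jacobi identity on triples of $L$'s is precisely the 2-cocycle condition on the generalized loop Virasoro algebra $\mathcal{L}(\Gamma)$, I would quote \cite[Theorem 4.3]{WWY}, which gives $H^2(\mathcal{L}(\Gamma), \mathbb{F}) \cong \prod_{k\in\mathbb{Z}} \mathbb{F}\bar{\phi}_k$. It follows that $\varphi|_{\mathcal{L}(\Gamma)\times\mathcal{L}(\Gamma)} = \sum_{k} a_k \phi_k + \psi_h$ for a unique family $(a_k)_{k\in\mathbb{Z}}\subset\mathbb{F}$ and some linear functional $h$ on $\mathcal{L}(\Gamma)$. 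Extending $h$ to $\W$ by zero on $\mathcal{M}(\Gamma)\oplus\mathcal{Y}(\Gamma,s)$ and each $\phi_k$ to $\W$ by zero on pairs having at least one $M$ or $Y$ entry, one checks directly that $\varphi = \sum_k a_k \phi_k + \psi_h$ as bilinear forms on all of $\W\times\W$; this works because $\varphi$ already vanishes on every mixed pair and because $\psi_h$ on a mixed pair equals $h$ of a bracket lying in $\mathcal{M}(\Gamma)$, which is zero by construction.

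To finish, I need to verify three points: each extended $\phi_k$ is a 2-cocycle on $\W$ (automatic, since each relevant Jacobi summand evaluates $\phi_k$ on a pair outside $\mathcal{L}(\Gamma)\times\mathcal{L}(\Gamma)$); the classes $\bar{\phi}_k$ remain $\mathbb{F}$-linearly independent in $H^2(\W,\mathbb{F})$, which is separated by evaluating on $(L_{\alpha,i}, L_{-\alpha,k-i})$ for any $\alpha \in \Gamma$ with $\alpha^3-\alpha\neq 0$; and the direct-product (rather than direct-sum) form is legitimate because for any basis pair $(L_{\alpha,i},L_{\beta,j})$ only the single term with $k=i+j$ can be nonzero in $\sum_k a_k\phi_k$, so arbitrary collections $(a_k)_{k\in\mathbb{Z}}$ yield well-defined bilinear forms on $\W$. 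The genuinely delicate work has already been performed in the preceding lemmas (in particular the algebraic manipulations to force $A_{4s,l}=0$ in Lemma \ref{lem4.3}); once those vanishings are in hand, the remainder is bookkeeping together with the citation to \cite[Theorem 4.3]{WWY}.
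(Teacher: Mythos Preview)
Your proposal is correct and follows essentially the same route as the paper, which simply states that the theorem follows from Lemmas~\ref{lem4.2}--\ref{lem4.4} together with \cite[Theorem~4.3]{WWY}; you have just spelled out the bookkeeping that the paper leaves implicit. One small slip: for a pair $(L_{\alpha,i},Y_{\beta+s,j})$ the bracket lands in $\mathcal{Y}(\Gamma,s)$ rather than $\mathcal{M}(\Gamma)$, but since you extended $h$ by zero on all of $\mathcal{M}(\Gamma)\oplus\mathcal{Y}(\Gamma,s)$ the conclusion $\psi_h=0$ on mixed pairs still holds.
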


Consider the central extension $\widetilde{\W}=\W\oplus \big(C\otimes \mathbb F[t,t^{-1}]\big)$, whose Lie brackets are given as in Section 1 except the first relation in \eqref{rel1} is replaced by \begin{eqnarray*} [L_{\alpha,i},L_{\beta,j}]= (\beta-\alpha)L_{\alpha+\beta,i+j}+\delta_{\alpha+\beta,0}\frac{\alpha^{3}-\alpha}{12}C_{i+j},\nonumber\end{eqnarray*} and in addition, \begin{equation}\label{Ext-}  [\widetilde{\W},C_k]=0 \mbox{ \  for all $\alpha,\beta\in\Gamma$ and $i,j,k\in\Z$,} \end{equation}where $C_k=C\otimes t^{k}$. Theorem \ref{th4.6} implies that $\widetilde{\W}$ is the universal central extension of $\W$.

\end{CJK*}
\end{document}